\newtheorem{theorem}{Theorem}[section]
\newtheorem{lemma}[theorem]{Lemma}
\newtheorem{proposition}[theorem]{Proposition}
\newtheorem{corollary}[theorem]{Corollary}
\newtheorem{remark}[theorem]{Remark}
\newtheorem{example}[theorem]{Example}
\theoremstyle{definition}
\newtheorem{definition}[theorem]{Definition}
\newtheorem{question}{Question}
\newcommand{\GL}{\mathop{\mathrm{GL}}}
\newcommand{\SL}{\mathop{\mathrm{SL}}}
\newcommand{\Lie}{\mathop{\mathrm{Lie}}}
\newcommand{\End}{\mathop{\mathrm{End}}}
\newcommand{\Hom}{\mathop{\mathrm{Hom}}\nolimits}
\newcommand{\Ext}{\mathop{\mathrm{Ext}}\nolimits}        %the \nolimits thing is to make sure the indices don't go on top/bottom when displayed
\DeclareMathOperator{\Ima}{Im}
\title{Reductive pairs arising from representations}
\author{Oliver Goodbourn\thanks{The author gratefully acknowledges partial support from EPSRC grant EP/L001462/1}}
\date{} % Activate to display a given date or no date (if empty),
\begin{document}

\maketitle
\begin{abstract}
Let $G$ be a reductive algebraic group and $V$ a $G$-module.  We consider the question of when $(\GL(V),\rho(G))$ is a reductive pair of algebraic groups, where $\rho$ is the representation afforded by $V$.  We first make some observations about general $G$ and $V$, then specialise to the group $\SL_2(K)$ with $K$ algebraically closed of positive characteristic $p$.  For this group we provide complete answers for the classes of simple and Weyl modules, the behaviour being determined by the base $p$ expansion of the highest weight of the module.  We conclude by illustrating some of the results from the first section with examples for the group $\SL_3(K)$.
\end{abstract}

\section{Introduction}

Since their introduction in 1967 by Richardson \cite{MR0217079}, reductive pairs have featured in the proofs of numerous results in algebraic group theory \cite{MR2178661,MR2861529,MR2608407,MR3042602, MR1635690}.   For a definition, see the preliminaries section.   In a very loose sense, they are sometimes employed when seeking to prove results that attempt to salvage the good behaviour of groups over fields of characteristic $0$ in the positive characteristic case.  Often, the likelihood of such behaviour being correctly modelled increases as the characteristic gets larger.  In many cases this is reflected in the relative likelihood of a particular pair of algebraic groups being a reductive pair in larger characteristics.  For instance, when the characteristic is large compared to the dimension of a $G$-module $V$,  \cite[3.1]{MR2861529} tells us that $(\GL(V),\rho(G))$ is always a reductive pair, where $\rho$ is the representation afforded by $V$.  In the same paper, this fact is exploited to great effect in giving a conceptually uniform proof of a result giving necessary and sufficient conditions (for large enough characteristics) for subgroups of a connected reductive group $G$ to be $G$-completely reducible.  This came after work of Serre in \cite{MR2167207}, where a more favourable bound on the characteristic was achieved, but using a comparatively technical method.

In this paper we consider the question of which rational modules give rise to reductive pairs.  That is, in the situation above, \begin{question}\label{TheQuestion}for which $G$-modules $V$ is $(\GL(V), \rho(G))$ a reductive pair?\end{question}   Our approach will be largely representation theoretic:  we will examine the submodules of $V\otimes V^* \cong \Lie \GL(V)$.  However, subtleties in the nature and behaviour of $G$ as an algebraic group will arise, which must be dealt with before we may bring the full force of the representation theory to bear. 

We derive complete pictures for simple modules and Weyl modules for the group $\SL_2$ in positive characteristic.  The proof of the result for simple modules exploits Doty and Henke's tensor product decomposition for simple modules for this group.  This work is then used to determine the behaviour for Weyl modules,  making use of character calculations due to Donkin.  

We present several results and methods applicable to modules for arbitrary simple algebraic groups.  With these, it is straight-forward to generate large classes of examples (and non-examples) of modules giving reductive pairs; we give examples of this for the group $\SL_3$.

\section{Preliminaries}

For unexplained terminology, the reader may consult \cite{MR2015057}.  Throughout, let $K$ be an algebraically closed field of positive characteristic $p$.  Let $G$ be a reductive algebraic group over $K$, $T$ a maximal torus of $G$ and $B$ a Borel subgroup of $G$ containing $T$.  Let $\Phi$ be the root system of $G$ with respect to $T$, and let $\Psi$ be the set of simple roots determined by $B$.  We write $X(T)$ for the set of weights of $G$ with respect to $T$, and $X(T)_+$ for the set of dominant weights.   The choice of $T$ and $B$ determines a basis of fundamental dominant weights; we will often specify weights by their coordinates with respect to this basis.  We write $W$ for the Weyl group of $G$, and $W_p$ for the affine Weyl group; these will act on weights via the dot action.  

Unless otherwise stated, by module we will mean finite-dimensional rational $KG$-module.  Given $\lambda\in X(T)_+$, we write $\nabla(\lambda)$, $\Delta(\lambda)$ and $L(\lambda)$ for the induced, Weyl and simple modules with highest weight $\lambda$, respectively.  Given a module $V$, we write $V^*$ for its linear dual and $V_\lambda$ for the $\lambda$ weight space of $V$.  We denote by $V^{F^i}$  the $i$\textsuperscript{th} Frobenius twist of the module $V$; thus if $\rho$ is the representation afforded by $V$, then that afforded by $V^{F^i}$ is $\rho \circ F^i$.

Some of our methods will use the properties of tilting modules for algebraic groups.  A reader unfamiliar with these may consult \cite{MR1200163}; we will need only the following facts.  We will call a module \emph{tilting} if it has both a good filtration and a Weyl filtration.  The indecomposable tilting modules are classified by highest weights: for each dominant weight $\lambda$ there exists an indecomposable tilting module $T(\lambda)$ with unique highest weight $\lambda$, and the $T(\mu)$ with $\mu\in X(T)_+$ form a complete set of non-isomorphic indecomposable tilting modules.  Direct sums and tensor products of (now arbitrary) tilting modules are tilting, as are direct summands of tilting modules.  Crucially, two tilting modules are isomorphic if and only if they have the same formal character.

%Definition of a reductive pair
\begin{definition}
Let $H$ be a closed, reductive subgroup of a reductive group $G$.  We say that $(G,H)$ is a \emph{reductive pair} if $\Lie H$ is an $H$-module direct summand of $\Lie G$, where $H$ acts via the adjoint representation of $G$.
\end{definition}
In \cite{MR1635690}, Slodowy collects together many examples of reductive pairs.

\section{General statements}

We first recall two results of Benson and Carlson.  These may be found in \cite{MR866779}.

\begin{proposition}[Benson and Carlson]\label{BensonCarlsonTrivial}
Let $M$ and $N$ be finite dimensional, indecomposable $KG$-modules.  Then $K$ is a summand of $M\otimes N$ if and only if the following two conditions are met: \begin{enumerate}
		\item $M \cong N^*$
		\item $p \nmid \dim N$
	\end{enumerate} 
Moreover, if $K$ is a direct summand of $N\otimes N^*$ then it occurs with multiplicity one.
\end{proposition}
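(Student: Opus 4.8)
The plan is to reduce the whole statement to a trace computation fed into Fitting's lemma. For finite-dimensional $M$ and $N$, use the natural isomorphism $M\otimes N\cong\Hom_K(M^{*},N)$ to identify $\Hom_{KG}(K,M\otimes N)$ with $\Hom_{KG}(M^{*},N)$ and $\Hom_{KG}(M\otimes N,K)$ with $\Hom_{KG}(M,N^{*})$. A trivial summand of $M\otimes N$ is the same data as a pair $f\colon K\to M\otimes N$, $g\colon M\otimes N\to K$ with $g\circ f$ a nonzero scalar (rescale to the identity). Chasing dual bases, one finds $g\circ f=\mathrm{tr}(\beta^{*}\!\circ\alpha)$, where $\alpha\colon M^{*}\to N$ and $\beta\colon M\to N^{*}$ are the $G$-maps corresponding to $f$ and $g$, and $\beta^{*}\!\circ\alpha$ is viewed in $\End_{KG}(M^{*})$ via $N^{**}\cong N$. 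So $K$ is a direct summand of $M\otimes N$ precisely when some such composite has nonzero trace.

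For necessity of $M\cong N^{*}$: since $M$, and hence $M^{*}$, is indecomposable, $\End_{KG}(M^{*})$ is local and its non-units are exactly its nilpotent elements, and a nilpotent endomorphism has trace zero. Thus if $\mathrm{tr}(\beta^{*}\!\circ\alpha)\neq 0$ then $\beta^{*}\!\circ\alpha$ is an automorphism of $M^{*}$, so $\alpha$ is a split monomorphism and $M^{*}$ is a direct summand of $N$; indecomposability of $N$ then forces $N\cong M^{*}$, i.e.\ $M\cong N^{*}$.

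It remains to treat $X:=N\otimes N^{*}\cong\End_K(N)$, and here I would exploit that both $\Hom_{KG}(K,X)$ and $\Hom_{KG}(X,K)$ are canonically identified with $E:=\End_{KG}(N)$, with the composition pairing $\Hom_{KG}(X,K)\times\Hom_{KG}(K,X)\to K$ becoming $(\beta,\alpha)\mapsto\mathrm{tr}(\beta\circ\alpha)$. First, the multiplicity of $K$ as a direct summand of $X$ equals the rank of this pairing: decomposing $X=K^{\oplus r}\oplus Y$ with $Y$ free of trivial summands, the pairing restricts to the standard nondegenerate form on the $K^{\oplus r}$ part and vanishes on $Y$, since any composite $K\to Y\to K$ is a scalar and would split a copy of $K$ off $Y$ if nonzero. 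Second, compute the rank: as $K$ is algebraically closed, $E$ is local with $E/\mathrm{rad}\,E=K$, so $E=K\cdot 1\oplus J$ with $J=\mathrm{rad}\,E$ a nil ideal; if $\alpha$ or $\beta$ lies in $J$ then $\beta\circ\alpha\in J$ is nilpotent and $\mathrm{tr}(\beta\circ\alpha)=0$, while $\mathrm{tr}(1\circ 1)=\dim N$. Hence the pairing has rank $1$ when $p\nmid\dim N$ and rank $0$ otherwise, which yields simultaneously that $K\mid N\otimes N^{*}$ iff $p\nmid\dim N$ and that the multiplicity is then exactly one.

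The conceptual engine is just ``nilpotents have trace zero'' together with the local structure of the endomorphism ring of an indecomposable module; I expect the only real friction to be the bookkeeping in the last paragraph, namely checking that the two identifications of the $\Hom$-spaces with $\End_{KG}(N)$ turn the composition pairing into the trace form, and that summand-multiplicity genuinely equals the rank of that pairing.
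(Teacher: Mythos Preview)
The paper does not prove this proposition; it simply records it as a result of Benson and Carlson and refers to \cite{MR866779}. Your argument is correct and is essentially the standard one: the necessity of $M\cong N^{*}$ via Fitting's lemma and the trace computation, and the multiplicity statement via the rank of the trace pairing on $\End_{KG}(N)$, are exactly how Benson and Carlson proceed. The only places to be careful are the ones you flag yourself, and they check out: the composition pairing really does become $(\beta,\alpha)\mapsto\mathrm{tr}(\beta\alpha)$ under the identifications (so that $(1,1)\mapsto\dim N$), and the decomposition $X=K^{\oplus r}\oplus Y$ genuinely block-diagonalises the pairing because cross-terms vanish for free and any nonzero composite $K\to Y\to K$ would split off a trivial summand of $Y$. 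Note also that you are tacitly using $K$ algebraically closed to get $E/\mathrm{rad}\,E=K$; this is part of the paper's standing hypotheses.
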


\begin{corollary}[Benson and Carlson]\label{BensonCarlson}
Suppose $M$ is an indecomposable $KG$-module with $p\mid\dim M$.  Then for any $KG$-module $N$ and any indecomposable summand $U$ of $M \otimes N$, we have $p\mid\dim U$.
\end{corollary}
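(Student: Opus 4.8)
The plan is to argue by contradiction, using Proposition~\ref{BensonCarlsonTrivial} twice. Suppose $U$ is an indecomposable direct summand of $M\otimes N$ with $p\nmid\dim U$; I will derive a contradiction with the hypothesis $p\mid\dim M$. The idea is that the coprimality of $\dim U$ forces the trivial module $K$ to appear inside $M\otimes(N\otimes U^*)$, which, after decomposing $N\otimes U^*$ into indecomposables, forces $K$ to sit inside $M\otimes Y$ for an indecomposable $Y$, and Proposition~\ref{BensonCarlsonTrivial} then says $p\nmid\dim M$.

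In detail, I would proceed as follows. Write $M\otimes N\cong U\oplus Z$ for some complement $Z$, and tensor with $U^*$ to obtain that $U\otimes U^*$ is a direct summand of $M\otimes N\otimes U^*$. Since $U$ is indecomposable and $p\nmid\dim U=\dim U^*$, Proposition~\ref{BensonCarlsonTrivial} applied to the pair $(U,U^*)$ shows that $K$ is a direct summand of $U\otimes U^*$; hence $K$ is a direct summand of $M\otimes(N\otimes U^*)$. Now decompose $N\otimes U^*\cong\bigoplus_i Y_i$ into indecomposables, so that $M\otimes(N\otimes U^*)\cong\bigoplus_i(M\otimes Y_i)$. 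As $K$ is indecomposable, the Krull--Schmidt theorem forces $K$ to be a direct summand of $M\otimes Y_i$ for some $i$. Both $M$ and $Y_i$ are finite-dimensional and indecomposable, so Proposition~\ref{BensonCarlsonTrivial} applies to the pair $(M,Y_i)$: from $K\mid M\otimes Y_i$ we deduce $M\cong Y_i^*$ and $p\nmid\dim Y_i$, whence $p\nmid\dim M$, contradicting the hypothesis. This completes the proof.

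There is no substantial obstacle here; the proof is essentially a two-fold bookkeeping exercise with Proposition~\ref{BensonCarlsonTrivial}. The only points worth a moment's care are: the passage to an indecomposable tensor factor $Y_i$, which is a routine application of Krull--Schmidt and is legitimate since all modules in sight are finite-dimensional; and the fact that Proposition~\ref{BensonCarlsonTrivial} bundles together the isomorphism condition and the divisibility condition, the latter being precisely the leverage we need. It is also worth noting that $N$ need not be indecomposable, which causes no trouble: the initial splitting uses only that $U$ is a summand of $M\otimes N$, and the decomposition into indecomposables is deferred to the module $N\otimes U^*$.
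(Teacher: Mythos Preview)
Your proof is correct and is essentially the standard derivation of this corollary from Proposition~\ref{BensonCarlsonTrivial}. The paper itself does not supply a proof: it simply attributes the result to Benson and Carlson and cites \cite{MR866779}, so there is nothing in the paper to compare against beyond noting that your argument is the expected one.
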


Using the corollary it is easy to show the following constraint on when a module gives a reductive pair.

\begin{proposition}\label{DimensionConstraint}
Let $K$ have characteristic $p$, and suppose $p\nmid \dim \Lie G$.  Let $V=V_1\otimes \cdots \otimes V_r$ be a $G$-module such that one of the $V_i$ is indecomposable and has dimension divisible by $p$.  Then $V$ does not give a reductive pair.
\end{proposition}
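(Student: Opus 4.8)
The plan is to read the result off from Corollary~\ref{BensonCarlson}, using the standard isomorphism $\Lie\GL(V)\cong V\otimes V^{*}$ of $G$-modules, where $G$ acts through $\rho$ by the adjoint representation of $\GL(V)$; under this identification, $V$ gives a reductive pair precisely when $\Lie\rho(G)$ is a direct summand of $V\otimes V^{*}$. The one representation-theoretic step is to show that every indecomposable direct summand of $V\otimes V^{*}$ has dimension divisible by $p$. Let $V_{i}$ be the tensor factor that is indecomposable with $p\mid\dim V_{i}$, and write $V\otimes V^{*}\cong V_{i}\otimes N$ where $N=\bigl(\bigotimes_{j\neq i}V_{j}\bigr)\otimes V^{*}$. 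Then Corollary~\ref{BensonCarlson}, applied with $M=V_{i}$ and this $N$, gives the claim immediately; since the indecomposable summands of any direct summand of $V\otimes V^{*}$ are, by Krull--Schmidt, among those of $V\otimes V^{*}$ itself, it follows that \emph{every} direct summand of $\Lie\GL(V)\cong V\otimes V^{*}$ has dimension divisible by $p$.

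Next, suppose for a contradiction that $(\GL(V),\rho(G))$ is a reductive pair. Then $\Lie\rho(G)$ is a direct summand of $\Lie\GL(V)$, so by the previous paragraph $p\mid\dim\Lie\rho(G)$. On the other hand $\rho(G)$ is a closed subgroup of $\GL(V)$ and $K$ is algebraically closed, so $\rho(G)$ is smooth and $\dim\Lie\rho(G)=\dim\rho(G)$; and provided $\ker\rho$ is finite --- in particular whenever $G$ is simple, as in the classes of modules treated later in the paper --- we have $\dim\rho(G)=\dim G=\dim\Lie G$. Hence $p\mid\dim\Lie G$, contradicting the hypothesis, so $V$ does not give a reductive pair.

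The step that I expect to require the most care is precisely this last dimension comparison: one has to ensure that $\rho$, as a morphism of algebraic groups, does not lose dimension, so that $\Lie\rho(G)$ genuinely has dimension $\dim\Lie G$ rather than something smaller. When $\ker\rho$ has positive dimension the natural remedy is to replace $G$ by $\bar{G}=G/(\ker\rho)^{\circ}_{\mathrm{red}}$, a connected reductive group through which $\rho$ factors with finite kernel, and then to check that the hypothesis on $\dim\Lie\bar{G}$ and a suitable tensor decomposition of $V$ as a $\bar{G}$-module are still in force. This is exactly the sort of subtlety about the behaviour of $G$ as an algebraic group alluded to in the introduction, and --- unlike the representation-theoretic part --- it is not a formal consequence of the cited results.
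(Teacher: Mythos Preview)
Your approach is essentially the paper's: rearrange $V\otimes V^{*}$ to put the bad factor $V_{i}$ in front, apply Corollary~\ref{BensonCarlson}, and conclude that no summand of $V\otimes V^{*}$ can have dimension coprime to $p$. The paper's proof is much terser --- it simply says ``$\Lie G$ (being indecomposable) cannot be a summand'' --- and in particular it does not engage with the $\Lie G$ versus $\Lie\rho(G)$ distinction you raise.

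Your caution on that point is well placed: the hypothesis is on $\dim\Lie G$, while the summand in question is $\Lie\rho(G)$, and nothing in the stated hypotheses forces these to have the same dimension (indeed, the parenthetical ``being indecomposable'' is not implied by the hypotheses either). As you say, this is harmless once $\ker\rho$ is finite, which covers every application in the paper since $G$ is $\SL_{2}$ or $\SL_{3}$ there. So your write-up is, if anything, more careful than the original on exactly the subtlety you flag; the paper simply leaves it implicit.
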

\begin{proof}
Suppose the factor $V_i$ is indecomposable and has dimension divisible by $p$.  Write 
\begin{displaymath}
	V\otimes V^* \cong V_i \otimes (V_1\otimes \cdots \otimes V_r \otimes (V_1 \otimes \cdots \otimes V_r)^*),
\end{displaymath}
where we have rearranged the tensor product to bring $V_i$ to the front.  Now corollary~\ref{BensonCarlson} implies that all indecomposable summands of $V\otimes V^*$ have dimension divisible by $p$.  In particular, $\Lie G$ (being indecomposable) cannot be a summand, hence the result. 
\end{proof}

\begin{lemma}
\label{VAndVF} %V^F iff V  
Let $G$ be a reductive algebraic group over $K$ and $V$ a $G$-module.  Then $V^F$ gives a reductive pair if and only if $V$ does.
\end{lemma}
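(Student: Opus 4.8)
The plan is to reduce the statement to the single observation that, as closed subgroups of $\GL(V)$, the images $\rho^F(G)$ and $\rho(G)$ coincide; once this is known, the two reductive-pair conditions become word-for-word identical and there is nothing further to prove.

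First I would unwind the definitions. The representation afforded by $V^F$ is $\rho \circ F$, where $F\colon G\to G$ is the Frobenius morphism, so the underlying vector space of $V^F$ is $V$ itself and $\GL(V^F)=\GL(V)$ on the nose. Hence $\rho^F(G) = (\rho\circ F)(G) = \rho(F(G))$ as a closed subgroup of $\GL(V)$. The key step is then to observe that $F(G) = G$: since $K$ is perfect, $F$ is a bijective morphism of varieties (in fact a purely inseparable isogeny), hence surjective, so its image is all of $G$. Therefore $\rho^F(G) = \rho(G)$, with the same embedding into $\GL(V)$, the same Lie subalgebra $\Lie\rho(G)\subseteq\Lie\GL(V)=\mathfrak{gl}(V)$, and the same adjoint action of that subgroup on $\mathfrak{gl}(V)$. (Along the way one notes $\rho^F(G)=\rho(G)$ is reductive because $\rho(G)$ is, so the pair is admissible in either formulation.) Consequently the assertion ``$\Lie\rho^F(G)$ is an $\rho^F(G)$-module direct summand of $\Lie\GL(V^F)$'' is literally the same assertion as ``$\Lie\rho(G)$ is an $\rho(G)$-module direct summand of $\Lie\GL(V)$'', which is exactly the desired equivalence.

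The main obstacle is conceptual rather than computational: the natural first instinct is to argue via Frobenius twists of the $G$-modules $V\otimes V^*\cong\Lie\GL(V)$ and $\Lie\rho(G)$, using that the twist functor is exact and respects direct sums. But the differential $dF$ is zero, so $\Lie$ interacts badly with Frobenius twisting, and one would still be forced back to the identification $\Lie\rho^F(G)=\Lie\rho(G)$ coming from the coincidence of the groups. Recognising that one should work at the level of the image subgroup, and that $F$ being surjective makes that image unchanged, is the whole content of the argument; this is precisely the kind of ``subtlety in the behaviour of $G$ as an algebraic group'' flagged in the introduction, since $V^F$ and $V$ may be radically different as $G$-modules while their images in $\GL(V)$ are the same subgroup.
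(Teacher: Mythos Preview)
Your proposal is correct and follows the same line as the paper's proof: both observe that $V^F$ and $V$ share the same underlying vector space so $\GL(V^F)=\GL(V)$, and that the Frobenius morphism is bijective so $(\rho\circ F)(G)=\rho(G)$ as subgroups of $\GL(V)$, whence the two reductive-pair conditions coincide. Your additional commentary on why arguing via module twists is the wrong instinct (because $dF=0$) is a helpful gloss but not part of the core argument.
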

\begin{proof}
Recall that $V^F$ and $V$ are equal as $K$-vector spaces, so that $GL(V)=GL(V^F)$.  Let the representation afforded by the module $V$ be denoted by $\rho$.  Since the Frobenius morphism is a bijection, the subgroups $\rho(G)$ and $(\rho \circ F) (G)$ of $GL(V)$ are equal, from which the result follows. 
\end{proof}

\begin{remark}
For a simply connected semi-simple algebraic group, the above lemma is especially helpful when Steinberg's tensor product theorem is taken into account: if $\lambda = p\mu$, with $\lambda$, $\mu$ dominant weights, then $L(\lambda) = L(\mu)^F$.
\end{remark}

\begin{lemma}\label{GeneralExampleMachine}
Let $V,W$ be $G$-modules and let $\rho$ be the representation afforded by $V$. Suppose that $V$ gives a reductive pair and that the differential $d\rho$ is injective; suppose further that $\End(W)$ has a summand isomorphic to $K$.  Then the module $V\otimes W^F$ gives a reductive pair.
\end{lemma}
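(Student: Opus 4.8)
The plan is to work inside the canonical identifications $\Lie\GL(U)=\End(U)$ and, for a tensor product, $\End(V\otimes W^F)\cong\End(V)\otimes\End(W^F)$ as modules for $G$ acting by conjugation through the relevant representations; write $\tau$ for the representation afforded by $W$ and $\sigma$ for that afforded by $V\otimes W^F$, so that $\sigma(g)=\rho(g)\otimes\tau(F(g))$. The whole proof then reduces to producing a direct-sum decomposition of $\End(V\otimes W^F)$ one of whose summands is $\Lie\sigma(G)$, the input from the hypotheses entering through three decompositions that are tensored together.

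First I would pin down $\Lie\sigma(G)$. Because the Frobenius morphism has zero differential, $d\sigma(x)=d\rho(x)\otimes\mathrm{id}_{W^F}$; hence $d\sigma$ is injective (as $d\rho$ is), so $\Lie\ker\sigma=\ker d\sigma=0$, so $\ker\sigma$ is finite, so $\sigma$ is separable onto its image, and likewise for $\rho$. Consequently $\Lie\rho(G)=\Ima d\rho$ and $\Lie\sigma(G)=\Ima d\sigma=(\Ima d\rho)\otimes K\,\mathrm{id}_{W^F}=\Lie\rho(G)\otimes K\,\mathrm{id}_{W^F}$. This is exactly where the Frobenius twist matters: it forces $d\sigma$ to land in the single tensor factor $\End(V)\otimes K\,\mathrm{id}_{W^F}$ of $\End(V)\otimes\End(W^F)$.

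Next I would turn the hypothesis on $\End(W)$ into the sharper statement that $K\,\mathrm{id}_W$ is itself a direct summand of $\End(W)$. By Proposition~\ref{BensonCarlsonTrivial} a trivial summand of $\End(W)=W\otimes W^*$ can only come from an indecomposable summand $U$ of $W$ with $p\nmid\dim U$; taking $f\in\End_G(W)$ to be the projection onto such a $U$, the $G$-equivariant functional $x\mapsto\mathrm{tr}(fx)$ is nonzero on $\mathrm{id}_W$, so $K\,\mathrm{id}_W$ splits off, say $\End(W)=K\,\mathrm{id}_W\oplus M$. Twisting, $\End(W^F)=\End(W)^F=K\,\mathrm{id}_{W^F}\oplus M^F$. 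Using in addition that $V$ gives a reductive pair, $\End(V)=\Lie\rho(G)\oplus C$ for some complement $C$, and tensoring the two decompositions gives
\begin{displaymath}
\End(V\otimes W^F)\cong\End(V)\otimes\End(W^F)=\bigl(\Lie\rho(G)\otimes K\,\mathrm{id}_{W^F}\bigr)\oplus\bigl((C\otimes K\,\mathrm{id}_{W^F})\oplus(\End(V)\otimes M^F)\bigr).
\end{displaymath}
I would then check that each summand on the right is stable under conjugation by $\sigma(G)$; this is routine once one observes that $F(G)=G$, so that conjugation by the $\tau(F(g))$ runs over all of $\tau(G)$ and hence preserves the $G$-submodule $M$ of $\End(W)$, while conjugation by the $\rho(g)$ preserves $\Lie\rho(G)$ and $C$. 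By the second paragraph the first summand equals $\Lie\sigma(G)$, so $\Lie\sigma(G)$ is a $\sigma(G)$-module direct summand of $\Lie\GL(V\otimes W^F)$, i.e.\ $V\otimes W^F$ gives a reductive pair.

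The main obstacle, I expect, is getting the identification of $\Lie\sigma(G)$ with $\Lie\rho(G)\otimes K\,\mathrm{id}_{W^F}$ exactly right: this rests on the differential computation (where injectivity of $d\rho$ and vanishing of the Frobenius differential both enter) and on the observation that the trivial summand of $\End(W^F)$ supplied by the hypothesis may be chosen to be $K\,\mathrm{id}_{W^F}$ — without these two points the decomposition in the third paragraph would be of no use. The remaining work — tracking everything as $\sigma(G)$-modules rather than merely as $G$-modules, and the tensoring of direct sums — is formal.
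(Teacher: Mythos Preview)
Your argument follows the same route as the paper's: compute $d\sigma=d\rho\otimes 1_{W^F}$ using that the Frobenius differential vanishes, identify $\Lie\sigma(G)$ with $d\rho(\Lie G)\otimes K\cdot 1_{W^F}$, and then tensor the two given splittings. Two remarks. First, your step ``$\ker\sigma$ is finite, so $\sigma$ is separable onto its image'' is not valid as stated (the Frobenius itself has trivial kernel but is inseparable); what actually gives $\Lie\sigma(G)=\Ima d\sigma$ is the dimension count $\dim\Ima d\sigma=\dim\Lie G\geq\dim\Lie\sigma(G)$, exactly as the paper argues, so you should phrase it that way. Second, your extra step --- using Proposition~\ref{BensonCarlsonTrivial} to find an indecomposable summand $U$ of $W$ with $p\nmid\dim U$ and then the trace pairing against the projection onto $U$ to show that the \emph{specific} line $K\cdot\mathrm{id}_W$ (rather than merely some trivial summand) splits off $\End(W)$ --- is a genuine improvement: the paper passes directly from ``$K$ is a summand of $\End(W^F)$'' to ``$d\rho(\Lie G)\otimes K\cdot 1_{W^F}$ is a summand'', and your argument supplies the justification that this particular copy of $K$ is the one that splits.
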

\begin{proof}
Let $\rho$ and $\sigma$ be the representations afforded by $V$ and $W$ respectively.  If we consider the representation \begin{displaymath}\rho\otimes\sigma^F:G\to\GL(V\otimes W^F),\end{displaymath} then the aim is to show that $\Lie(\rho\otimes\sigma^F(G))$ is a summand of $\End(V\otimes W^F)$.  First, we have the differential $d(\rho\otimes\sigma^F):\Lie G\to\End(V\otimes W^F)$, and \begin{equation}\label{ImageContained}\Ima(d(\rho\otimes\sigma^F))\subset \Lie(\rho\otimes\sigma^F(G)),\end{equation} that is, the image of the differential is contained in the Lie algebra of the image of the representation.

By the properties of the differential (\cite[3.21]{MR1102012}), $d(\rho\otimes\sigma^F) = d\rho\otimes 1_{W^F} + 1_V\otimes d\sigma^F$ (where here $1_{W^F}$ refers to the identity map on $W^F$); since the map $\sigma^F$ is equal to $\sigma \circ F$ and the differential of the Frobenius morphism $F$ is $0$, we have that $d\sigma^F=0$, whence $d(\rho\otimes\sigma^F) = d\rho \otimes 1_{W^F}$.  Thus \begin{displaymath}\dim d(\rho\otimes \sigma^F)(\Lie G) = \dim (d \rho \otimes 1_{W^F}) (\Lie G), \end{displaymath} which, since $d\rho$ is an isomorphism onto its image, is equal to $\dim \Lie G$.  However, \begin{displaymath}\dim \Lie G \geq \dim \Lie(\rho\otimes\sigma^F(G)),\end{displaymath} since the Lie algebra on the right is that of an algebraic group morphic image of the group the Lie algebra of which is on the left.  We therefore have equality in \ref{ImageContained}.  Thus we may look for the image of the differential rather than the Lie algebra of the image when deciding if $V\otimes W^F$ gives a reductive pair.

We note that $\End(V\otimes W^F) \cong \End(V)\otimes \End(W^F)$  \cite[Ch II, 4.4, prop. 4]{MR1727844}.  It will be convenient to identify these spaces via such an isomorphism.  Thus \begin{displaymath}d(\rho\otimes\sigma^F)=d\rho\otimes 1_{W^F}:\Lie G\to \End(V)\otimes \End(W^F).\end{displaymath} 
So we have $\Lie (\rho \otimes \sigma^F(G) )= (d\rho\otimes 1_{W^F})(\Lie G) = \{ d\rho X \otimes 1_{W^F} \mid X \in \Lie G\} = d\rho \Lie G \otimes K^F$.  Finally, we note that since $\Lie G$ is a summand of $\End(V)$ and $K^F$ is a summand of $\End(W^F)$, their tensor product $\Lie G \otimes K^F$ is a summand of $\End(V)\otimes \End(W^F) \cong \End(V\otimes W^F)$.
\end{proof}

\begin{corollary}\label{SimpleExampleMachine}
Let $G$ be a simple algebraic group over a field $K$ of positive characteristic $p$ such that $p$ is very good for $G$, let $\lambda$ be a restricted dominant weight such that the simple $G$-module $L(\lambda)$ gives a reductive pair, and let $\mu$ be a dominant weight such that $p \nmid \dim L(\mu)$.  Then the module $L(\lambda + p^n \mu)$ gives a reductive pair for any integer $n\geq 1$.
\end{corollary}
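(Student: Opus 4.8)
The plan is to exhibit $L(\lambda+p^{n}\mu)$ as a tensor product of the shape $V\otimes W^{F}$ and then apply Lemma~\ref{GeneralExampleMachine}. Since $\lambda$ is restricted and (as $n\geq 1$) $p^{n-1}\mu$ is a dominant weight, Steinberg's tensor product theorem gives
\begin{displaymath}
L(\lambda+p^{n}\mu)=L(\lambda+p\cdot p^{n-1}\mu)\cong L(\lambda)\otimes L(p^{n-1}\mu)^{F},
\end{displaymath}
and $n-1$ further applications (with trivial restricted part at each stage) identify $L(p^{n-1}\mu)\cong L(\mu)^{F^{n-1}}$, which is still simple, of dimension $\dim L(\mu)$. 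Setting $V=L(\lambda)$ and $W=L(\mu)^{F^{n-1}}$ we have $L(\lambda+p^{n}\mu)\cong V\otimes W^{F}$, so it suffices to check the three hypotheses of Lemma~\ref{GeneralExampleMachine} for this $V$ and this $W$.

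The first hypothesis, that $V=L(\lambda)$ gives a reductive pair, is part of the assumption. For the third, concerning $\End(W)$: the module $W$ is simple, hence indecomposable, and $p\nmid\dim W$ by the hypothesis on $\mu$; hence Proposition~\ref{BensonCarlsonTrivial}, applied with $M=W^{*}$ and $N=W$ (both simple, hence indecomposable, and for which the condition $M\cong N^{*}$ is automatic), shows that $K$ is a direct summand of $W^{*}\otimes W\cong\End(W)$.

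The remaining hypothesis --- injectivity of the differential $d\rho$, where $\rho$ is the representation afforded by $V=L(\lambda)$ --- is the one place the assumption that $p$ is very good for $G$ is used. Under that assumption $\Lie G$ is a simple Lie algebra, so $\ker d\rho$, being an ideal of $\Lie G$, is either $0$ or all of $\Lie G$; and if $\lambda\neq 0$ then, $\lambda$ being restricted, $L(\lambda)$ restricts to a non-trivial simple module for the first Frobenius kernel $G_{1}$, so $\Lie G$ acts non-trivially on it and $d\rho\neq 0$, forcing $\ker d\rho=0$. With all three hypotheses verified, Lemma~\ref{GeneralExampleMachine} gives that $V\otimes W^{F}\cong L(\lambda+p^{n}\mu)$ gives a reductive pair. (The case $\lambda=0$ is degenerate: then $L(\lambda+p^{n}\mu)=L(\mu)^{F^{n}}$, which by Lemma~\ref{VAndVF} gives a reductive pair if and only if $L(\mu)$ does --- something not implied by $p\nmid\dim L(\mu)$ alone --- so the statement is to be read with $\lambda\neq 0$.)

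The only genuinely non-formal input here is the structural fact that a very good prime forces $\Lie G$ to be simple; this is exactly what upgrades ``$d\rho\neq 0$'' to ``$d\rho$ injective'', and everything else is bookkeeping with Steinberg's theorem, Proposition~\ref{BensonCarlsonTrivial}, and Lemmas~\ref{VAndVF} and~\ref{GeneralExampleMachine}. (One should also take ``simple algebraic group'' in a form for which Steinberg's theorem applies, as in the remark following Lemma~\ref{VAndVF}.)
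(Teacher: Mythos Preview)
Your proof is correct and follows essentially the same route as the paper: write $L(\lambda+p^{n}\mu)\cong L(\lambda)\otimes L(\mu)^{F^{n}}$ via Steinberg, verify the hypotheses of Lemma~\ref{GeneralExampleMachine} using Proposition~\ref{BensonCarlsonTrivial} for the trivial summand of $\End(W)$, and use that $p$ very good makes $\Lie G$ simple to force $d\rho$ injective. The only cosmetic difference is that the paper invokes simplicity of $\Lie G$ as a $G$-module (so $d\rho$ is a $G$-map with simple source), whereas you invoke simplicity as a Lie algebra (so $\ker d\rho$ is an ideal); both are valid under the very-good-prime hypothesis, and your explicit treatment of the degenerate case $\lambda=0$ is a welcome clarification the paper leaves implicit.
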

\begin{proof}
Since $K$ is infinite and $p$ is very good for the simple group $G$, the Lie algebra of $G$ is a simple $G$-module (see for instance \cite{MR2608407}). 
Thus any homomorphism leaving $\Lie G$ is either the zero map or is injective; since $\lambda$ is restricted, the differential is therefore injective.  
Since $L(\lambda)$ gives a reductive pair, we know that $\Lie G$ is a summand of $\Lie \GL(V)$.  Since $p\nmid \dim L(\mu)$, proposition~\ref{BensonCarlsonTrivial} tells us that $L(\mu)\otimes L(\mu)^*$ has a summand isomorphic to $K$.  We now apply lemma~\ref{GeneralExampleMachine}, noting that $L(\lambda)\otimes L(\mu)^{F^n} \cong L(\lambda + p^n\mu)$.
\end{proof}

\begin{remark}\label{GetMoreExamples}
If we relax the conditions on $\lambda$ and $\mu$, more may be said.  If $\lambda,\mu$ are required only to be dominant weights, then it may be that $L(\lambda)\otimes L(\mu)^{F^n}$ is not a simple module; however, the lemma still applies, so this module still gives a reductive pair.  Weaker conditions can be found to ensure that the module is still simple, for instance requiring that $\lambda \in X_n(T)$ with non-zero restricted part.
\end{remark}

\begin{lemma}\label{SplitSequenceInduction}
Let $R$ be a ring and let $M$ be an $R$-module with a filtration $$0 \leq M_1 \leq M_2 \leq \ldots \leq M_n =M. $$  Suppose every quotient $L_i :=  M_i / M_{i-1}$ is such that $\Ext^1_R(L_i, M_1) = 0$. (Equivalently, every short exact sequence $0 \to M_1 \to E \to L_i \to 0$ splits).  Then $M_1$ is an $R$-module direct summand of $M$: $M=M_1 \oplus W$ for some $W\leq M$.
\end{lemma}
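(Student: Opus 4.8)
The plan is to induct on the length $n$ of the filtration. The base case $n=1$ is trivial: then $M=M_1$ and we may take $W=0$. For the inductive step, suppose the statement holds for all filtrations of length less than $n$, and consider $M$ equipped with its length-$n$ filtration. First I would apply the inductive hypothesis to the truncated filtration $0\leq M_1\leq M_2\leq\cdots\leq M_{n-1}$, whose successive quotients $M_i/M_{i-1}$ for $2\leq i\leq n-1$ are exactly $L_2,\dots,L_{n-1}$ and hence still satisfy the hypothesis $\Ext^1_R(L_i,M_1)=0$. This produces a submodule $W'\leq M_{n-1}$ with $M_{n-1}=M_1\oplus W'$.

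Next I would push the situation down to the quotient $M/W'$. Since $M_1\cap W'=0$ inside $M_{n-1}$, and hence inside $M$, the composite $M_1\hookrightarrow M\twoheadrightarrow M/W'$ is injective, with cokernel $M/(M_1+W')=M/M_{n-1}\cong L_n$. So there is a short exact sequence of $R$-modules $0\to M_1\to M/W'\to L_n\to 0$. Because $\Ext^1_R(L_n,M_1)=0$ — equivalently, because every extension of $L_n$ by $M_1$ splits, which is the parenthetical reformulation given in the statement — this sequence splits, so $M/W'=M_1\oplus\overline N$ for some submodule $\overline N\leq M/W'$.

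Finally I would pull $\overline N$ back along the quotient map $q\colon M\to M/W'$: set $W'':=q^{-1}(\overline N)$, so $W'\leq W''\leq M$. Then $q$ carries $M_1\cap W''$ into $M_1\cap\overline N=0$, so $M_1\cap W''\leq\ker q=W'$, whence $M_1\cap W''=M_1\cap W'=0$; while $q(M_1+W'')=M_1+\overline N=M/W'$ together with $W'=\ker q\subseteq W''$ forces $M_1+W''=M$. Thus $M=M_1\oplus W''$, completing the induction.

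I do not anticipate a genuine obstacle here: the argument is a routine induction on the length. The only points requiring attention are the bookkeeping that the truncated filtration inherits the hypothesis $\Ext^1_R(L_i,M_1)=0$ for the relevant indices, and the verification in the last step that a complement of $M_1$ in the quotient $M/W'$ pulls back to a genuine complement of $M_1$ in $M$ — this works precisely because we arranged $W'\leq M_{n-1}$ to meet $M_1$ trivially.
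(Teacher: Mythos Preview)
Your argument is correct. Both you and the paper induct on the length of the filtration, but the inductive content is different. The paper applies the long exact sequence of $\Ext$ in the first variable to each short exact sequence $0\to M_{i-1}\to M_i\to L_i\to 0$, obtaining
\[
\Ext^1_R(L_i,M_1)\to\Ext^1_R(M_i,M_1)\to\Ext^1_R(M_{i-1},M_1),
\]
and concludes inductively that the middle term vanishes; from this the splitting of $0\to M_1\to M\to M/M_1\to 0$ follows. You instead carry along an explicit complement: assuming $M_{n-1}=M_1\oplus W'$, you pass to $M/W'$, recognise it as an extension of $L_n$ by $M_1$, split it using the hypothesis, and pull the complement back. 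Your route is more elementary in that it uses only the interpretation of $\Ext^1_R(L_i,M_1)=0$ as ``every extension of $L_i$ by $M_1$ splits'' and never invokes the long exact sequence; the paper's route is shorter once that machinery is in hand and yields the slightly stronger intermediate statement that the $\Ext^1$ groups themselves vanish.
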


\begin{proof}
 Given a short exact sequence $0\to A' \to A \to A'' \to 0$ of $R$-modules and an $R$-module $B$, we may consider the long exact sequence of the Ext functor in the first variable, $$\cdots \to \Ext_R^{n}(A'', B) \to \Ext_R^{n}(A,B) \to \Ext_R^{n}(A',B) \to \Ext_R^{n+1}(A'',B) \to \cdots . $$  In the notation of the statement, we are given exact sequences $0 \to M_{i-1} \to M_i \to L_i \to 0$, and we therefore have exact sequences $$\Ext_R^{1}(L_i,M_1)\to \Ext_R^{1}(M_i,M_1)\to\Ext_R^{1}(M_{i-1},M_1), $$ in which the first term is $0$ by hypothesis, and the last term is $0$ by induction (the base case being clear).  Thus $M_1$ is a summand of each $M_i$, in particular $M_n=M$.
\end{proof}

\begin{remark}\label{CompFacsMethod}
Provided that we can work out the composition factors of $V\otimes V^*$ for a $G$-module $V$, 
we may sometimes use lemma~\ref{SplitSequenceInduction} to show that $V$ gives a reductive pair (this method will not tell us that a given module \emph{does not} give a reductive pair).  We note that (the image of ) the Lie algebra of the image of $G$ is a simple submodule of $V\otimes V^*$ (subject, potentially, to minor constraints on the characteristic), so that $0\leq \Lie G \leq V\otimes V^*$; this may be refined into a composition series for $V\otimes V^*$ with $\Lie G$ at the bottom.  Thus, if we know a posteriori that all extensions of the Lie algebra by the other composition factors must be split, lemma~\ref{SplitSequenceInduction} tells us that the Lie algebra is a direct summand of $V\otimes V^*$ (whence the module $V$ gives a reductive pair).

We also note at this point that lemma~\ref{SplitSequenceInduction} implies equally that all the submodules of $V\otimes V^*$ in the same isomorphism class as $\Lie G$ are summands under the same hypotheses.  
\end{remark}

It will be useful to have some results that tell us about extensions of simple modules.  First of all, we recall the linkage principle (see for instance \cite[Corollary~6.17]{MR2015057}).
\begin{proposition}\label{LinkagePrinciple}
Let $\lambda,\mu\in X(T)_+$.  If $\Ext^1_G(L(\lambda),L(\mu))\neq 0$, then $\lambda\in W_p\cdot \mu$.
\end{proposition}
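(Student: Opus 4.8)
The plan is to deduce this from the block decomposition of the category of rational $G$-modules, which is the standard route. First I would recall that the abelian category of finite-dimensional rational $KG$-modules decomposes as a direct sum of ``blocks'', indexed by the linkage classes, i.e.\ by the orbits of $X(T)_+$ under the dot action of the affine Weyl group $W_p$. This is the content of the linkage principle in its strong form (see \cite[II.7]{MR2015057}); it is proved using the strong linkage principle, which in turn rests on the fact that the composition factors $L(\mu)$ of a Weyl module $\Delta(\lambda)$ all satisfy $\mu \uparrow \lambda$, together with Jantzen's sum formula or the translation principle. For the present statement I only need the coarser consequence: if two simple modules lie in different blocks, then there are no nontrivial extensions between them in either direction.

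With the block decomposition in hand, the argument is short. Suppose $\lambda, \mu \in X(T)_+$ lie in different $W_p$-dot-orbits, so $\lambda \notin W_p \cdot \mu$. Then $L(\lambda)$ and $L(\mu)$ belong to distinct blocks $\mathcal{B}_\lambda \neq \mathcal{B}_\mu$ of the module category. Any extension $0 \to L(\mu) \to E \to L(\lambda) \to 0$ yields a module $E$ whose composition factors lie in two different blocks; by the block decomposition $E \cong E_{\lambda} \oplus E_{\mu}$ with $E_\nu$ the projection of $E$ onto $\mathcal{B}_\nu$. Since $L(\lambda)$ and $L(\mu)$ are indecomposable and lie in their respective blocks, the surjection $E \to L(\lambda)$ and injection $L(\mu) \to E$ are split by the corresponding block projections, so the sequence splits. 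Hence $\Ext^1_G(L(\lambda), L(\mu)) = 0$, and contrapositively, $\Ext^1_G(L(\lambda),L(\mu)) \neq 0$ forces $\lambda \in W_p \cdot \mu$.

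The main obstacle is that the block decomposition itself is a substantial theorem; proving it from scratch would require developing the strong linkage principle (via the $\uparrow$ ordering on weights and Jantzen's filtration/sum formula) and then upgrading it from a statement about composition factors of $\Delta(\lambda)$ and $\nabla(\lambda)$ to a statement about the whole category. Since this is classical and recorded in \cite[Corollary~6.17]{MR2015057} (and the cited reference is precisely to that corollary), the honest thing to do here is to cite it rather than reprove it; the only ``work'' left is the elementary splitting argument above, which is essentially the observation that $\Ext^1$ vanishes between objects of different blocks. If one preferred a self-contained route, the alternative is to invoke the strong linkage principle directly: an extension of $L(\lambda)$ by $L(\mu)$ embeds (up to the usual dualities) in a section of a tensor product or of $\nabla(\nu)$'s, and chasing composition factors through the strong linkage relation $\uparrow$ again pins $\mu$ into the $W_p$-dot-orbit of $\lambda$ — but this merely unpacks the proof of \cite[Corollary~6.17]{MR2015057}.
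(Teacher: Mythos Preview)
Your proposal is correct, and indeed you have already anticipated what the paper does: it gives no proof at all, simply recalling the result with the citation to \cite[Corollary~6.17]{MR2015057}. Your block-decomposition argument is the standard unpacking of that citation, so there is nothing to compare.
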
 
We will use the contrapositive statement, that if $\lambda \not\in W_p\cdot\mu$, then any extension of $L(\lambda)$ by $L(\mu)$ must be split.

The following lemma is found in \cite[Section~2.12]{MR2015057}. %That's Jantzen
\begin{lemma}\label{ExtLambdaLambda}
The group $\Ext^1_G(L(\lambda),L(\lambda))=0$ for all $\lambda \in X(T)_+$.
\end{lemma}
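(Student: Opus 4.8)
The plan is to realise $\Ext^1_G(L(\lambda),L(\lambda))$ as trapped between two Ext-groups involving the Weyl module $\Delta(\lambda)$, both of which vanish for reasons that amount either to weight bookkeeping or to the standard cohomological machinery of Weyl and induced modules. Write $R$ for the unique maximal submodule of $\Delta(\lambda)$, so that we have the short exact sequence
$$0 \to R \to \Delta(\lambda) \to L(\lambda) \to 0,$$
with $\Delta(\lambda)/R \cong L(\lambda)$. The first observation I would record is that $\lambda$ is not a weight of $R$: since $\dim \Delta(\lambda)_\lambda = 1$ and the quotient map restricts to a surjection $\Delta(\lambda)_\lambda \twoheadrightarrow L(\lambda)_\lambda$ of one-dimensional spaces, this restriction is an isomorphism, forcing $R_\lambda = 0$. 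Hence $L(\lambda)$ is not a composition factor of $R$, and therefore the image of any $G$-homomorphism $R \to L(\lambda)$ — being simultaneously a quotient of $R$ and a submodule of the simple module $L(\lambda)$ — is zero; that is, $\Hom_G(R,L(\lambda)) = 0$.

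Next I would bring in the vanishing $\Ext^1_G(\Delta(\lambda),L(\lambda)) = 0$. This can be deduced from the fundamental orthogonality $\Ext^i_G(\Delta(\lambda),\nabla(\mu)) = 0$ for all $i \geq 1$: applying $\Hom_G(\Delta(\lambda),-)$ to $0 \to L(\lambda) \to \nabla(\lambda) \to \nabla(\lambda)/L(\lambda) \to 0$ exhibits $\Ext^1_G(\Delta(\lambda),L(\lambda))$ as a quotient of $\Hom_G(\Delta(\lambda),\nabla(\lambda)/L(\lambda))$, and this last group vanishes because $L(\lambda)$, the head of $\Delta(\lambda)$, is not a composition factor of $\nabla(\lambda)/L(\lambda)$ (which has trivial $\lambda$-weight space, again by the one-dimensionality of $\nabla(\lambda)_\lambda$). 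Alternatively this is simply cited from \cite{MR2015057}.

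Finally, applying the contravariant functor $\Hom_G(-,L(\lambda))$ to the short exact sequence above produces the long exact sequence, a portion of which reads
$$\Hom_G(R,L(\lambda)) \to \Ext^1_G(L(\lambda),L(\lambda)) \to \Ext^1_G(\Delta(\lambda),L(\lambda)).$$
The two outer terms have just been shown to vanish, so $\Ext^1_G(L(\lambda),L(\lambda)) = 0$, as required. (Equivalently, one can argue directly: a non-split self-extension $E$ of $L(\lambda)$, after pulling back along $\Delta(\lambda)\twoheadrightarrow L(\lambda)$ and using $\Ext^1_G(\Delta(\lambda),L(\lambda))=0$, would have to be a quotient of $\Delta(\lambda)$, contradicting $\dim E_\lambda = 2 > 1 = \dim\Delta(\lambda)_\lambda$.) The one genuinely non-elementary ingredient, and hence the main obstacle, is precisely $\Ext^1_G(\Delta(\lambda),L(\lambda)) = 0$, resting ultimately on the vanishing $\Ext^{>0}_G(\Delta(\lambda),\nabla(\lambda)) = 0$; everything else is a matter of tracking which weights occur where.
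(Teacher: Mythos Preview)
Your argument is correct. Note, however, that the paper does not actually supply a proof of this lemma: it simply records it as a known fact, citing \cite[Section~2.12]{MR2015057} (Jantzen). Your proof is essentially the standard one found there (up to the harmless choice of working with $\Delta(\lambda)$ rather than, dually, with $\nabla(\lambda)$): trap $\Ext^1_G(L(\lambda),L(\lambda))$ in a long exact sequence between a $\Hom$-group and an $\Ext^1$-group involving a standard module, and kill both using the one-dimensionality of the extremal weight space together with the basic vanishing $\Ext^{>0}_G(\Delta(\lambda),\nabla(\mu))=0$. So there is nothing to compare beyond observing that you have spelled out what the paper left as a citation.
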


\section{The group $\SL_2$}

\subsection{Basic facts}
In this section we focus on the group $G=\SL_2(K)$.  A great deal is known about this group and its representations, making it a logical choice for a first example.  For dominant weights $0\leq u \leq p-1$ we have $T(u)=L(u)=\nabla(u)=\Delta(u)$.  For $p\leq u \leq 2p-2$ the module $T(u)$ is uniserial with composition factors $L(2p-2-u),L(u),$ and $L(2p-2-u)$.  It is well-known (see for instance \cite{MR2143497}) that for $\SL_2(K)$ the module $\nabla(r) \cong S^rE$, the $r^{\mathrm{th}}$ symmetric power of the $2$-dimensional natural module for $\SL_2(K)$.  Thus the simple $\SL_2(K)$-modules are tensor products of Frobenius twists of such symmetric powers, by Steinberg's tensor product theorem.  The simple $\SL_2(K)$-modules are self-dual. As in \cite{MR2143497}, we shall call the indecomposable tilting modules $T(u)$ with $0\leq u \leq 2p-2$ \emph{fundamental}.

\subsection{Simple Modules}

In this section we show necessary and sufficient conditions for a simple module for $\SL_2(K)$ to give a reductive pair.  

\begin{lemma}\label{LookForL2}
Let $K$ have characteristic $p\geq 3$, and let $\rho: \SL_2(K) \to \GL(V)$ be a nontrivial rational representation.  Then $\Lie \rho( \SL_2(K))$ is isomorphic to the Lie algebra $\mathfrak{sl}_2(K)$ of trace-zero $2\times 2$ matrices with entries in $K$.
\end{lemma}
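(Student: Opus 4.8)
The plan is to identify the image $H:=\rho(\SL_2(K))$ as an algebraic group in its own right and then read off $\Lie H$. The crucial point to keep in mind is that the statement concerns $\Lie H$, not $\Ima(d\rho)$: these genuinely differ when $\rho$ is inseparable (for instance when $\rho=\sigma\circ F$, so that $d\rho=0$ while $H$ is still three-dimensional). So the argument must proceed purely at the level of algebraic groups, without assuming $\rho$ separable.

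First I would use the normal subgroup structure of $\SL_2(K)$: its only proper closed normal subgroups are $\{1\}$ and the centre $Z=\{\pm I\}$. Indeed a finite normal subgroup of a connected group is central, hence lies in $Z$; and a positive-dimensional closed normal subgroup has positive-dimensional identity component, which for the simple group $\SL_2(K)$ must be the whole group. Since $\rho$ is nontrivial, $N:=\ker\rho$ is thus $\{1\}$ or $Z$. Hence $H$ is a connected semisimple group of dimension $3$, so of rank $1$ and type $A_1$, and over the algebraically closed field $K$ this forces $H\cong\SL_2(K)$ (if $N=\{1\}$) or $H\cong\mathrm{PGL}_2(K)$ (if $N=Z$). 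I would note here that $H$, being a closed subgroup of $\GL(V)$ over the perfect field $K$, is smooth, so $\dim\Lie H=\dim H=3$ regardless of whether the isogeny $\SL_2(K)/N\to H$ is inseparable.

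It remains to compute $\Lie H$ in the two cases. If $H\cong\SL_2(K)$ there is nothing to do. If $H\cong\mathrm{PGL}_2(K)$, then $\Lie H\cong\mathfrak{gl}_2(K)/KI$; since $\operatorname{tr}(I)=2\neq0$ in $K$, we may split $\mathfrak{gl}_2(K)=KI\oplus\mathfrak{sl}_2(K)$ as Lie algebras, giving $\Lie H\cong\mathfrak{sl}_2(K)$. Either way $\Lie\rho(\SL_2(K))\cong\mathfrak{sl}_2(K)$, as required. (As an $\SL_2(K)$-module this Lie algebra may be a Frobenius twist of the adjoint module, but the isomorphism asserted is one of Lie algebras.)

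The lemma is not deep: the only genuine uses of $p\geq3$ are the description of $Z$ and the final splitting $\mathfrak{gl}_2(K)=KI\oplus\mathfrak{sl}_2(K)$ — for $p=2$ the groups $\SL_2$ and $\mathrm{PGL}_2$ have non-isomorphic Lie algebras and the statement fails. The one real pitfall, and the reason the lemma is phrased via $\rho(\SL_2(K))$ rather than $\Ima(d\rho)$, is to avoid tacitly assuming $\rho$ is separable; the route above sidesteps $d\rho$ entirely. Beyond assembling these standard facts I anticipate no serious obstacle.
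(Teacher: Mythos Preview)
Your proposal is correct and follows essentially the same route as the paper: identify $\ker\rho$ as trivial or central, conclude that the image is isomorphic as an algebraic group to $\SL_2(K)$ or $\mathrm{PGL}_2(K)$, and observe that for $p\neq 2$ both have Lie algebra $\mathfrak{sl}_2(K)$. Your version is more careful on several points the paper leaves implicit (smoothness of $H$, the explicit splitting $\mathfrak{gl}_2=KI\oplus\mathfrak{sl}_2$, and the distinction between $\Lie H$ and $\Ima d\rho$), but the underlying argument is the same.
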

\begin{proof}
The kernel of $\rho$ is a closed normal subgroup of $\SL_2(K)$ and is therefore either trivial or is the centre of $\SL_2(K)$, which consists of those scalar matrices having determinant $1$.  The image $\rho(\SL_2(K))$ is thus isomorphic as an abstract group to $\SL_2(K)$, and as an algebraic group either to $\SL_2(K)$ or to $\mathrm{PGL}_2(K)$.  Since $p\nmid 2$, the Lie algebras of both of these groups are isomorphic, and in particular are isomorphic to $\mathfrak{sl}_2(K)$.
\end{proof}

It is well known that if $K$ has characteristic $p$ and $p\nmid n$, then $\Lie \SL_n(K)$ is an irreducible $\SL_n(K)$-module.  Thus when deciding whether or not a representation of $\SL_2(K)$ gives a reductive pair, if the characteristic is greater than or equal to $3$, we may be certain the Lie algebra we look for is isomorphic to $L(2)$, the $3$-dimensional simple $\SL_2$-module with highest weight $2$.  In \cite{MR3042602}, Herpel introduces the notion of pretty good primes.  In the paper he shows that when the characteristic of $K$ is not pretty good for $G$, there can be no reductive pair of the form $(\GL(V),G)$.  Since $2$ is not a pretty good prime for a group of type $A_1$, and since for a non-trivial representation the image of $\SL_2(K)$ in $\GL(V)$ is of this type, it follows that in characteristic $2$, no $\SL_2(K)$-modules give reductive pairs.

It is easy to calculate the dimensions of the simple modules for $SL_2$.  We may also infer an initial constraint on the simple modules that can possibly give reductive pairs.  Both observations are direct applications of Steinberg's tensor product theorem.
\begin{lemma}\label{SimpleModuleDimension}
Let $\lambda = a_0 + a_1p + \dots + a_rp^r$ be a non-negative integer.  Then the dimension of the simple $KSL_2$-module $L(\lambda)$ is $\prod_{i=0}^r(a_i+1)$. 
\end{lemma}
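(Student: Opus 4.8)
The plan is to deduce this immediately from Steinberg's tensor product theorem together with the description of the restricted simple modules for $\SL_2(K)$ recalled in Section~4.1. The only ingredients needed are: (i) for a single $p$-adic digit $0\le a\le p-1$ we have $L(a)=\nabla(a)\cong S^aE$, so $\dim L(a)=a+1$; (ii) Frobenius twisting does not change the dimension of a module, since $L(a)^{F^i}$ is the same $K$-vector space as $L(a)$ with the action precomposed with $F^i$; and (iii) dimension is multiplicative over tensor products.

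First I would write the base $p$ expansion $\lambda=a_0+a_1p+\dots+a_rp^r$ with each $0\le a_i\le p-1$, and apply Steinberg's tensor product theorem to get
\begin{displaymath}
L(\lambda)\cong L(a_0)\otimes L(a_1)^{F}\otimes\cdots\otimes L(a_r)^{F^r}.
\end{displaymath}
Then, taking dimensions, and using (ii) and (iii),
\begin{displaymath}
\dim L(\lambda)=\prod_{i=0}^r\dim L(a_i)^{F^i}=\prod_{i=0}^r\dim L(a_i).
\end{displaymath}
Finally I would substitute $\dim L(a_i)=a_i+1$ from (i), obtaining $\dim L(\lambda)=\prod_{i=0}^r(a_i+1)$.

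There is no real obstacle here; the statement is essentially a bookkeeping consequence of Steinberg's theorem, so the ``hard part'' is only to make sure the edge cases are covered: when $\lambda=0$ all digits vanish and the formula correctly gives $\dim L(0)=1$ for the trivial module, and the hypothesis $p\ge 2$ (so that a genuine base $p$ expansion exists) is the only thing implicitly used. I would also remark that the identification $L(a)\cong S^aE$ is exactly the fact recalled in Section~4.1 ($\nabla(r)\cong S^rE$ for $\SL_2(K)$, which is simple in the restricted range), so no new representation theory is required.
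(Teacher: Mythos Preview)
Your proof is correct and follows exactly the approach indicated in the paper: the paper does not spell out a proof but simply remarks that the lemma is a direct application of Steinberg's tensor product theorem, which is precisely what you do.
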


\begin{lemma}
Suppose $p>3$, and let $\lambda$ be a non-negative integer with base $p$ expansion $\lambda = a_0 + a_1p+\dots + a_kp^k$.  Supppose at least one of the $a_i=p-1$.  Then $L(\lambda)$ does not give a reductive pair.
\end{lemma}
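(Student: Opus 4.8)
The plan is to derive this from the dimension obstruction of Proposition~\ref{DimensionConstraint}: the hypothesis that some base $p$ digit $a_j$ equals $p-1$ is exactly what forces $p$ to divide $\dim L(\lambda)$, and this is incompatible with $L(\lambda)\otimes L(\lambda)^*$ having a summand of dimension coprime to $p$.

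First I would pin down the Lie algebra that a reductive pair would require as a summand. Since $p>3$, the digit $a_j=p-1$ is nonzero, so $\lambda\neq 0$ and the representation $\rho$ afforded by $L(\lambda)$ is nontrivial. By Lemma~\ref{LookForL2} and the discussion immediately following it, $\Lie\rho(\SL_2(K))\cong\mathfrak{sl}_2(K)\cong L(2)$, which is $3$-dimensional and, being simple, indecomposable. Since $p>3=\dim\Lie\SL_2(K)$, we also have $p\nmid\dim\Lie\SL_2(K)$, so the hypothesis of Proposition~\ref{DimensionConstraint} is in force.

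Next I would compute $\dim L(\lambda)$. By Lemma~\ref{SimpleModuleDimension}, $\dim L(\lambda)=\prod_{i=0}^{k}(a_i+1)$, and the factor $a_j+1=p$ shows $p\mid\dim L(\lambda)$. As $L(\lambda)$ is simple, hence indecomposable, Proposition~\ref{DimensionConstraint} applied with the trivial one-term tensor product $V=L(\lambda)$ (equivalently, applying Corollary~\ref{BensonCarlson} directly with $M=L(\lambda)$ and $N=L(\lambda)^*$) shows that every indecomposable summand of $L(\lambda)\otimes L(\lambda)^*$ has dimension divisible by $p$. The $3$-dimensional module $L(2)$ is therefore not a summand, and so $L(\lambda)$ does not give a reductive pair.

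I do not anticipate a genuine obstacle: the argument is essentially a one-line application of the Benson--Carlson divisibility principle once $\dim L(\lambda)$ is known. The only point requiring care is the appeal to Proposition~\ref{DimensionConstraint}, whose statement refers to $\Lie G$ rather than $\Lie\rho(G)$; for $\SL_2$ in characteristic $p>3$ these have the same $3$-dimensional character, so the substitution is harmless. Alternatively, using Steinberg's tensor product theorem one could instead single out the indecomposable tensor factor $L(p-1)^{F^j}$ of $L(\lambda)$, which has dimension exactly $p$, and apply Proposition~\ref{DimensionConstraint} verbatim.
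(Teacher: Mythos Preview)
Your proposal is correct and matches the paper's approach, which simply says the lemma follows from Proposition~\ref{DimensionConstraint}; you have filled in the details accurately. Either variant you give---treating $L(\lambda)$ as a single indecomposable factor of dimension divisible by $p$, or invoking the Steinberg factorisation to isolate $L(p-1)^{F^j}$---is a legitimate way to apply that proposition.
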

\begin{proof}
 This follows upon application of proposition~\ref{DimensionConstraint}.
\end{proof}

It is a standard result that the module $\nabla(\mu)\otimes \nabla(\nu)$ has a good filtration with sections isomorphic to $\nabla(\mu + \nu), \nabla(\mu + \nu - 2), \ldots, \nabla(\mu - \nu)$ (each with multiplicity one, with $\nu \leq \mu$).

\begin{lemma}
\label{NablaAndDeltaHomSpaces}
$\dim \Hom_{SL_2}(\Delta(\lambda),\nabla(\mu)\otimes\nabla(\nu))\leq1$.
\end{lemma}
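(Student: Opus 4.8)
The plan is to translate the Hom-space dimension into a multiplicity count in a good filtration. Since $\nabla(\mu)\otimes\nabla(\nu)\cong\nabla(\nu)\otimes\nabla(\mu)$ we may assume $\nu\leq\mu$. Then, by the standard result recalled immediately above, $\nabla(\mu)\otimes\nabla(\nu)$ admits a good filtration whose sections are $\nabla(\mu+\nu-2j)$ for $0\leq j\leq\nu$, each occurring with multiplicity one. The point I would stress is that the weights $\mu+\nu-2j$, $0\leq j\leq\nu$, are pairwise distinct, so for any single dominant weight $\lambda$ at most one section of this filtration is isomorphic to $\nabla(\lambda)$.

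Next I would invoke the two basic cohomological facts about Weyl and induced modules (valid for any reductive $G$, and in particular found in \cite{MR2015057}): $\Hom_G(\Delta(\lambda),\nabla(\mu))$ is one-dimensional when $\lambda=\mu$ and zero otherwise, while $\Ext^1_G(\Delta(\lambda),\nabla(\mu))=0$ for all $\lambda,\mu$. From these one deduces, by induction on the length of a good filtration $0=M_0\subseteq M_1\subseteq\cdots\subseteq M_n=M$, that $\dim\Hom_G(\Delta(\lambda),M)$ equals the number of indices $i$ with $M_i/M_{i-1}\cong\nabla(\lambda)$, and that moreover $\Ext^1_G(\Delta(\lambda),M)=0$. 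The inductive step applies $\Hom_G(\Delta(\lambda),-)$ to $0\to M_{n-1}\to M\to\nabla(\mu_n)\to 0$: the vanishing of $\Ext^1_G(\Delta(\lambda),M_{n-1})$ (by induction) and of $\Ext^1_G(\Delta(\lambda),\nabla(\mu_n))$ splits off a short exact sequence of Hom spaces, so dimensions add, and simultaneously squeezes $\Ext^1_G(\Delta(\lambda),M)$ between two zeros.

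Putting the two pieces together, $\dim\Hom_{\SL_2}(\Delta(\lambda),\nabla(\mu)\otimes\nabla(\nu))$ equals the multiplicity of $\nabla(\lambda)$ among the sections listed in the first paragraph, which is at most $1$. I do not expect a genuine obstacle here; the only subtlety is that multiplicities of sections in a good filtration need to be well defined, but this is exactly what the Hom--Ext induction of the second paragraph supplies, and the rest is bookkeeping with the explicit list of sections.
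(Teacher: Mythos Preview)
Your proof is correct and follows essentially the same route as the paper: both use the good filtration of $\nabla(\mu)\otimes\nabla(\nu)$ with distinct sections $\nabla(\mu+\nu-2j)$ together with the standard fact $\dim\Hom_G(\Delta(\lambda),\nabla(\beta))=\delta_{\lambda,\beta}$. The only difference is cosmetic: the paper uses just left exactness of $\Hom_G(\Delta(\lambda),-)$ to obtain the inequality $\dim\Hom_G(\Delta(\lambda),\nabla(\mu)\otimes\nabla(\nu))\leq\sum_j\dim\Hom_G(\Delta(\lambda),\nabla(\mu+\nu-2j))$, whereas you additionally invoke $\Ext^1_G(\Delta(\lambda),\nabla(\beta))=0$ to upgrade this to an equality. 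Your version thus proves slightly more (an exact multiplicity count), at the cost of one extra input; the paper's version is marginally more elementary but already sufficient for the stated bound.
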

\begin{proof}
By the zero case of  \cite[Prop.~4.13]{MR2015057}, \begin{equation}\label{JantzenHomSpaces}\dim \Hom_G (\Delta(\alpha),\nabla(\beta))=\begin{cases} 1 &\mbox{if } \alpha=\beta \\ 0 &\mbox{if } \alpha\neq\beta. \end{cases}  \end{equation}
For a given module $X$ and a short exact sequence $0 \to Y_1 \to Y \to Y_2 \to 0$ of $KG$-modules, we have $\dim \Hom_G(X,Y) \leq \dim\Hom_G(X,Y_1)+\dim\Hom_G(X,Y_2)$.  We apply this result inductively to a good filtration of $\nabla(\mu)\otimes\nabla(\nu)$, giving \begin{displaymath} \dim \Hom_G(\Delta(\lambda),\nabla(\mu)\otimes \nabla(\nu)) \leq \sum_{i=0}^{\nu} \dim \Hom_G(\Delta(\lambda),\nabla(\mu + \nu -2i)). \end{displaymath}  Since at most one of the integers $\mu + \nu - 2i$ is equal to $\lambda$, formula~\ref{JantzenHomSpaces} implies that the right hand side is at most $1$.
\end{proof}

\begin{corollary}
\label{SimpleHomSpaces}
$\dim \Hom_{SL_2} (L(\lambda),L(\mu)\otimes L(\nu)) \leq 1$.  
\end{corollary}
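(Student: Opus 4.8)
The plan is to reduce the statement about simple modules to the statement about Weyl and induced modules already proved in Lemma~\ref{NablaAndDeltaHomSpaces}. The key observation is that $L(\mu)$ is a quotient of $\Delta(\mu)$ and $L(\nu)$ is a submodule of $\nabla(\nu)$, and more fundamentally that $L(\lambda)$ sits between $\Delta(\lambda)$ and $\nabla(\lambda)$, so a nonzero homomorphism out of $L(\lambda)$ into $L(\mu)\otimes L(\nu)$ can be detected at the level of $\Delta$'s and $\nabla$'s. Concretely, I would first note the exact sequence $0\to \mathrm{rad}\,\Delta(\lambda)\to\Delta(\lambda)\to L(\lambda)\to 0$, which gives an injection $\Hom_G(L(\lambda),M)\hookrightarrow \Hom_G(\Delta(\lambda),M)$ for any module $M$ (applying $\Hom_G(-,M)$ and using left-exactness). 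Taking $M=L(\mu)\otimes L(\nu)$, it therefore suffices to bound $\dim\Hom_G(\Delta(\lambda),L(\mu)\otimes L(\nu))$.

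Next I would pass from $L(\mu)\otimes L(\nu)$ up to $\nabla(\mu)\otimes\nabla(\nu)$. Since $L(\nu)\hookrightarrow\nabla(\nu)$ and (for $\SL_2$) $L(\mu)$ is self-dual, hence $L(\mu)\hookrightarrow\nabla(\mu)$ as well, tensoring gives an inclusion $L(\mu)\otimes L(\nu)\hookrightarrow\nabla(\mu)\otimes\nabla(\nu)$. Applying the left-exact functor $\Hom_G(\Delta(\lambda),-)$ yields an injection
\begin{displaymath}
\Hom_G(\Delta(\lambda),L(\mu)\otimes L(\nu))\hookrightarrow \Hom_G(\Delta(\lambda),\nabla(\mu)\otimes\nabla(\nu)),
\end{displaymath}
and the right-hand side has dimension at most $1$ by Lemma~\ref{NablaAndDeltaHomSpaces}. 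Chaining the two injections gives $\dim\Hom_{\SL_2}(L(\lambda),L(\mu)\otimes L(\nu))\le 1$, as required. One could alternatively bypass the second step entirely by appealing to the fact that $\nabla(\mu)\otimes\nabla(\nu)$ has a good filtration and running the same inductive $\Hom$-dimension estimate as in Lemma~\ref{NablaAndDeltaHomSpaces} directly, using that $L(\lambda)$ has simple head so that $\dim\Hom_G(L(\lambda),\nabla(\beta))\le\dim\Hom_G(\Delta(\lambda),\nabla(\beta))=\delta_{\lambda\beta}$; but the two-step reduction above is cleaner.

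The only genuine subtlety — and hence the main point to get right — is the reduction from $L$'s to $\Delta$'s and $\nabla$'s: one must be careful that the maps $\Hom_G(L(\lambda),-)\to\Hom_G(\Delta(\lambda),-)$ and $\Hom_G(\Delta(\lambda),L(\mu)\otimes L(\nu))\to\Hom_G(\Delta(\lambda),\nabla(\mu)\otimes\nabla(\nu))$ really are injective, which is just left-exactness of $\Hom$ applied to the surjection $\Delta(\lambda)\twoheadrightarrow L(\lambda)$ and the injection $L(\mu)\otimes L(\nu)\hookrightarrow\nabla(\mu)\otimes\nabla(\nu)$ respectively. The self-duality of simple $\SL_2$-modules (recorded in the Basic facts subsection) is what makes $L(\mu)\hookrightarrow\nabla(\mu)$ available; without it one would use $L(\mu)$ as a quotient of $\Delta(\mu)$ and dualise. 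Everything else is formal, so I expect the proof to be quite short.
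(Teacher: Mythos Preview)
Your proof is correct and follows essentially the same two-step reduction as the paper: first replace $L(\lambda)$ by $\Delta(\lambda)$ in the first variable, then embed $L(\mu)\otimes L(\nu)$ into $\nabla(\mu)\otimes\nabla(\nu)$ in the second, and invoke Lemma~\ref{NablaAndDeltaHomSpaces}. One minor remark: the inclusion $L(\mu)\hookrightarrow\nabla(\mu)$ holds in general because $L(\mu)$ is the socle of $\nabla(\mu)$, so the appeal to self-duality of simple $\SL_2$-modules is unnecessary (though harmless).
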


\begin{proof}
Note that $\nabla(\lambda)$ has socle isomorphic to $L(\lambda)$, and $\Delta(\lambda)$ has head isomorphic to $L(\lambda)$.  Thus $\Hom_{SL_2}(L(\lambda),L(\mu)\otimes L(\nu))$ embeds in $\Hom_{SL_2}(\Delta(\lambda),L(\mu)\otimes L(\nu))$.   
Next, $\Hom_{SL_2}(\Delta(\lambda),L(\mu)\otimes L(\nu)) \leq \Hom_{SL_2}(\Delta(\lambda),\nabla(\mu)\otimes \nabla(\nu))$.  Thus it is enough to prove that this latter space has dimension $\leq 1$, which is lemma~\ref{NablaAndDeltaHomSpaces}.%For this one, do the map X to Z followed by inclusion Z into Y.  
\end{proof}

Combined with lemma~\ref{LookForL2}, the argument above shows in particular that in characteristic $p\neq2$ there is at most (hence exactly) one submodule of any tensor product $L(\lambda)\otimes L(\lambda)^* \cong L(\lambda)\otimes L(\lambda)$ that is isomorphic to $L(2)$, hence to the Lie algebra of the image of $SL_2$ in the representation $\rho$.  Thus, if we find such a submodule (whether or not it appears as a summand) we may be certain that it is in fact the Lie algebra of the image; this is a great aid in deciding whether an $SL_2$-module gives a reductive pair.

In \cite{MR2143497}, Doty and Henke provide a decomposition of the tensor product $L(a)\otimes L(b)$ of two simple modules for $\SL_2$ into indecomposable direct summands.  Each of the summands is a tensor product of Frobenius twists of fundamental tilting modules for $\SL_2$.

We are now ready to state the main result of this section, which classifies the simple $SL_2$-modules giving reductive pairs.  The majority of the work has been done already, and the proof will therefore take the form of a few observations, minor calculations and checks.

\begin{theorem}\label{simple}
Let $\lambda \neq 0 $ be a non-negative integer with base $p$ expansion $\lambda= a_0 + a_1p + \cdots + a_kp^k$ and $\rho:SL_2(K) \to GL(L(\lambda))$ the representation afforded by the simple module $L(\lambda)$.  Let $l$ be the smallest integer for which $a_l\neq 0$.  Then, when $p>3$, $(GL(L(\lambda)),\rho(SL_2))$ is a reductive pair if and only if \begin{enumerate}
\item $a_i \leq p-2$ for all $i$, and
\item $a_l \leq p-3$.                              
\end{enumerate}

If $p=3$, then $(GL(L(\lambda)),\rho(SL_2))$ is a reductive pair if and only if all $a_i \leq 1$ \emph{except for} $a_l$, which can be $1$ \emph{or} $2$.

If $p=2$, $(GL(L(\lambda)),\rho(SL_2))$ is never a reductive pair.
\end{theorem}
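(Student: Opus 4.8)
The case $p=2$ is already disposed of by Herpel's pretty‑good‑primes result, as noted above, so assume $p\geq 3$. The first move is to reduce to the case where the lowest base‑$p$ digit of the highest weight is nonzero. If $l$ is the smallest index with $a_{l}\neq 0$, write $\lambda=p^{l}\mu$ with $\mu=a_{l}+a_{l+1}p+\cdots$; iterating Steinberg gives $L(\lambda)\cong L(\mu)^{F^{l}}$, so by Lemma~\ref{VAndVF} (applied $l$ times) $L(\lambda)$ gives a reductive pair if and only if $L(\mu)$ does. Replacing $\lambda$ by $\mu$ we may assume $a_{0}\neq 0$, and the conditions in the statement become ``$a_{i}\leq p-2$ for all $i$ and $a_{0}\leq p-3$'' when $p>3$, and ``$a_{0}\in\{1,2\}$ and $a_{i}\leq 1$ for $i\geq 1$'' when $p=3$. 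With $a_{0}\neq 0$, Lemma~\ref{LookForL2} and the remark after it identify $\Lie\rho(\SL_{2})$ with $L(2)=T(2)$; this is an $\SL_{2}$‑submodule of $L(\lambda)\otimes L(\lambda)^{*}\cong L(\lambda)\otimes L(\lambda)$ (self‑duality of the simple modules), hence lies in the socle, and by Corollary~\ref{SimpleHomSpaces} it is the \emph{unique} copy of $L(2)$ there. So $L(\lambda)$ gives a reductive pair precisely when $L(2)$ is a direct summand of $L(\lambda)\otimes L(\lambda)$.

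The plan is then to analyse $L(\lambda)\otimes L(\lambda)\cong\bigotimes_{i}\bigl(L(a_{i})\otimes L(a_{i})\bigr)^{F^{i}}$ via Steinberg together with the Doty--Henke decomposition: each $L(a_{i})\otimes L(a_{i})$ with $0\leq a_{i}\leq p-1$ is a direct sum of fundamental tilting modules $T(c)$ with $c$ even and $0\leq c\leq 2p-2$. From the classical form of this decomposition one reads off two facts. First, the trivial module $T(0)=K$ is a summand of $L(a_{i})\otimes L(a_{i})$ exactly when $p\nmid\dim L(a_{i})=a_{i}+1$, i.e.\ $a_{i}\leq p-2$ (Proposition~\ref{BensonCarlsonTrivial}). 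Second, for $1\leq a_{0}\leq p-1$ the good‑filtration section $\nabla(2)=L(2)$ of $L(a_{0})\otimes L(a_{0})$ survives as a summand $T(2)$ unless it is absorbed, together with $\nabla(2p-4)$, into $T(2p-4)$, which happens exactly when $2p-4$ occurs among the sections ($a_{0}\geq p-2$) and $2p-4\geq p$ ($p\geq 5$); so $T(2)$ is a summand of $L(a_{0})\otimes L(a_{0})$ iff $a_{0}\leq p-3$ when $p>3$, and always when $p=3$. Finally, since $T(2)$ has highest weight $2<p$ while a summand $\bigotimes_{i}T(c_{i})^{F^{i}}$ arising in the expansion of the product has highest weight $\sum_{i}c_{i}p^{i}$, one checks that $L(2)$ can occur as a summand of $L(\lambda)\otimes L(\lambda)$ only through the choice $c_{0}=2$, $c_{i}=0$ for $i\geq 1$. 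Combining with the first fact: $L(\lambda)$ gives a reductive pair iff $T(2)$ is a summand of $L(a_{0})\otimes L(a_{0})$ and $a_{i}\leq p-2$ for all $i\geq 1$; unwinding the reduction recovers the three cases of the theorem.

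For the two implications it may be cleanest to argue as follows. For ``if'': once we know the restricted module $L(a_{0})$ itself gives a reductive pair (the absorption analysis above), Corollary~\ref{SimpleExampleMachine} with $n=1$ shows $L(a_{0}+p\mu)=L(\lambda)$ does, because the hypotheses force $p\nmid\dim L(\mu)$. For ``only if'', if some digit equals $p-1$ the dimension obstruction applies: when $p>3$ this is the already‑established lemma (via Proposition~\ref{DimensionConstraint}), whereas when $p=3$, where $\dim L(2)=p$ makes that obstruction silent, one shows instead that the Lie algebra, which lies in a summand $L(2)\otimes\End(U)$ of $\End(L(\lambda))$ with $U=L(\mu)^{F}$, cannot split off: every $\SL_{2}$‑homomorphism $\End(U)\to K$ is a scalar multiple of the trace, and $\operatorname{tr}(1_{U})=\dim L(\mu)\equiv 0$ when $\mu$ has a digit $p-1$. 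And if all digits are $\leq p-2$ but $a_{0}=p-2$ (so $p>3$), then $L(2)$ sits inside the summand $T(2p-4)$ of $L(a_{0})\otimes L(a_{0})$, so $L(\lambda)\otimes L(\lambda)$ has $T(2p-4)\not\cong L(2)$ as a summand with socle $L(2)$, which by the uniqueness above must be the Lie algebra — hence no reductive pair.

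The main obstacle is the bookkeeping in the second paragraph: determining precisely when $T(2)$ survives as a summand of a tensor product of Frobenius twists of fundamental tilting modules rather than being swallowed by a larger indecomposable, and in particular justifying that $L(2)$ cannot be produced by the Frobenius‑twisted factors. This is delicate exactly because for $p=3$ the crude dimension count yields nothing and because the behaviour at the boundary digits $p-3$, $p-2$, $p-1$ must be checked by hand from the Clebsch--Gordan rule. By comparison, the Frobenius reduction and the identification of the Lie algebra as the unique $L(2)$ in the socle are routine given the preliminary results.
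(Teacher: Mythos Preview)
Your proposal is correct, and the route you settle on in the third paragraph is genuinely different from the paper's. The paper simply quotes Doty--Henke's Theorem~2.1 in full strength: that result already asserts that the \emph{indecomposable} summands of $L(\lambda)\otimes L(\lambda)^*$ are twisted tensor products $\bigotimes_i T(u_i)^{F^i}$, so one just reads off combinatorially when one of them equals $L(2)$. You instead split into implications: for ``if'' you reduce via Corollary~\ref{SimpleExampleMachine} to the restricted case (a classical Clebsch--Gordan computation); for ``only if'' you combine Proposition~\ref{DimensionConstraint}, the socle bound of Corollary~\ref{SimpleHomSpaces} together with an explicit $T(2p-4)$ summand when $a_0=p-2$, and a trace argument at $p=3$. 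This buys independence from the full Doty--Henke machinery --- only the restricted decomposition is needed --- at the cost of a longer case analysis.

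Two remarks. First, the obstacle you flag in paragraph two is real and is exactly what the paper's appeal to Doty--Henke dissolves: the naive pieces $\bigotimes_i T(c_i)^{F^i}$ obtained by distributing over Steinberg are not in general indecomposable (Frobenius twists of tilting modules need not be tilting, so these pieces are not even tilting), and the highest-weight count alone cannot exclude a hidden $L(2)$ inside one of them. Second, your $p=3$ trace argument is compressed: to conclude, you need that every $G$-map $L(2)\otimes\End(U)\to L(2)$ has the form $\mathrm{id}_{L(2)}\otimes\psi$ with $\psi\in\Hom_G(\End(U),K)$. This holds because $\Hom_G(L(2)\otimes\End(U),L(2))\cong\Hom_G(\End(U),\End(L(2)))=\Hom_G(\End(U),L(2)\oplus T(4))$, and since $\End(U)$ is a Frobenius twist it has neither $L(2)$ nor $L(4)$ as a composition factor, forcing any such map to land in $\mathrm{soc}\,T(4)=K\cdot 1_{L(2)}$. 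Similarly, in the $a_0=p-2$ case you are tacitly using that $K$ is a summand of $\End(L(\mu))^F$ (Proposition~\ref{BensonCarlsonTrivial}, valid since the remaining digits are $\leq p-2$) to extract $T(2p-4)$ itself as a summand of $\End(L(\lambda))$.
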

\begin{proof}
Considering lemma~\ref{VAndVF}, we may assume $l=0$.  Applying \cite[Theorem 2.1]{MR2143497}, we see that the indecomposable summands of $L(\lambda)\otimes L(\lambda)^*$ all take the form $\bigotimes_i T(u_i)^{F^i}$, where the $u_i$ are determined by an easy combinatorial process and depend only on the base $p$ expansion of $\lambda$.  By lemma~\ref{SimpleHomSpaces} (and Krull-Schmidt), it is therefore enough to determine whether or not one of these summands is isomorphic to the Lie algebra of the image of $\SL_2$.  

In characteristic $p\geq 3$, this module is isomorphic to $L(2)$, and one calculates that there is a summand $\bigotimes_i T(u_i)^{F^i}$ isomorphic to $L(2)$ precisely when the stated conditions are met.  It was already noted above that no $\SL_2(K)$ module gives a reductive pair when $p=2$.  For simple modules, this can also be seen directly in the representation theory, as \cite[Theorem 2.3]{MR2143497} implies that $L(\lambda)\otimes L(\lambda)^*$ is itself indecomposable.  Thus, to be a summand, $\Lie \rho(\SL_2)$ would need to be isomorphic to $L(\lambda)\otimes L(\lambda)^*$; this is impossible, since $\dim \Lie \rho(\SL_2) = 3$ is not a square integer.
\end{proof}

\subsection{Weyl Modules}

Define \begin{displaymath}Y(r):=\begin{cases} \nabla(m)\otimes\Delta(m) & \mbox{ if } r=2m \mbox{ is even,} \\ \nabla(m+1)\otimes \Delta(m) & \mbox{ if } r=2m+1 \mbox{ is odd.}\end{cases}\end{displaymath} Note that for even $r$, $Y(r)\cong \nabla(m)\otimes(\nabla(m))^*$.

Recall the definition of tilting modules from the preliminaries section.
The following lemma is a special case of \cite[Lemma 3.3]{AlisonHigherExtensions}, and is due to Donkin.
\begin{lemma}
\label{NablaDeltaTilting}
If $r,s\geq 0$ with $|r-s|\leq 1$, then $\nabla(r)\otimes \Delta(s)$ is a tilting module. 
\end{lemma}

It can be shown that for all non-negative, even integers $r$, the character of $Y(r)$ is a sum of characters of certain tilting modules.  This implies an isomorphism of tilting modules between $Y(r)$ and the direct sum of the tilting modules whose characters appear, and will allow us to reduce the problem of determining (for each non-negative integer $m$) whether $\nabla(m)$ gives a reductive pair for $SL_2$, to the already covered case of simple modules.  The character calculations involved were kindly shown to us by Stephen Donkin.
Let $p>2$ and let $0\leq a \leq p-3$ be an even integer.  We have

\begin{multline}\label{lastbit}Y(2pm+a) \cong \nabla(p-1)\otimes\nabla(a+1)\otimes Y(2m-1)^F \\ \oplus \nabla(p-1)\otimes\nabla(p-a-3)\otimes Y(2m-2)^F \oplus Y(a).\end{multline}

\begin{multline}\label{lastbit3}Y(p-1+2mp+a)\cong \nabla(p-1)\otimes\nabla(a)\otimes Y(2m)^F \oplus \nabla(p-1)\otimes\nabla(p-2-a)\otimes Y(2m-1)^F \\ \oplus Y(p-a-3).\end{multline}

\begin{align}\label{lastbit4}Y(2pm+2p-2)=\nabla(p-1)\otimes\nabla(p-1)\otimes Y(2m)^F.\end{align}

Considering \ref{lastbit}, \ref{lastbit3} and \ref{lastbit4}, we have expressions for $Y(r)$ for each even, non-negative $r$.  Thus we have expressions for $\nabla(m)\otimes\Delta(m)$ with $r=2m$ -- that is, all non-negative integers $m$.

\begin{theorem}\label{symmetric}
Let $K$ be an algebraically closed field of characteristic $p>0$, let $n$ be a non-negative integer and let $\rho:SL_2(K)\to GL(\nabla(n))$ be the representation afforded by $\nabla(n)$, the $n$\textsuperscript{th} symmetric power of the natural module.
\begin{enumerate}

\item If $K$ has characteristic $p>3$, then $(GL(\nabla(n)),\rho(SL_2))$ is a reductive pair if and only if $n\not\equiv p, p-1$ or $p-2$ (mod $p$).

\item If $K$ has characteristic $3$, then $(\GL(\nabla(n)), \rho(SL_2))$ is a reductive pair if and only if $n \equiv 1, 2, \dots 6$ (mod $9$).

\item If $K$ has characteristic $2$, then $(\GL(\nabla(n)), \rho(SL_2))$ is never a reductive pair.

\end{enumerate}
\end{theorem}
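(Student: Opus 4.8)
The plan is to compute $V\otimes V^{*}\cong\nabla(n)\otimes\Delta(n)$ as a tilting module and to detect when the indecomposable tilting summand corresponding to the Lie algebra of the image of $SL_2$ is present. We may assume $n\geq1$, and statement (iii) is immediate from the fact --- already recorded --- that no $SL_2$-module gives a reductive pair when $p=2$, so assume $p\geq3$. First I would note that $\nabla(n)\otimes\Delta(n)=Y(2n)$ is a tilting module by Lemma~\ref{NablaDeltaTilting}, and that the Lie algebra $\Lie\rho(SL_2)$ we must locate inside it is $\cong\mathfrak{sl}_2\cong L(2)=T(2)$ by Lemma~\ref{LookForL2}; moreover the image of $d\rho$ is a nonzero $SL_2$-submodule of the simple module $L(2)$ (nonzero because for $n\geq1$ the lowering operator of $\mathfrak{sl}_2$ acts nontrivially on $\nabla(n)=S^{n}E$), so it equals $L(2)$ and hence coincides with $\Lie\rho(SL_2)$. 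Since $T(2)$ is an indecomposable tilting module and a direct summand of a tilting module is tilting, $L(2)$ is a direct summand of $Y(2n)$ if and only if $T(2)$ appears in the decomposition of $Y(2n)$ into indecomposable tilting modules; and by Corollary~\ref{SimpleHomSpaces} any such summand is the unique submodule isomorphic to $L(2)$, hence is precisely $\Lie\rho(SL_2)$. So for $p\geq3$ the theorem reduces to the question: \emph{for which $n$ does $T(2)$ appear in the tilting decomposition of $Y(2n)$?}

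Next I would feed $n$ into the recursions~\ref{lastbit}, \ref{lastbit3} and~\ref{lastbit4}, which together cover all residues of $n$ modulo $p$ and write $Y(2n)$ as a direct sum of (i) a single ``base'' summand, of the form $Y(a)$ or $Y(p-a-3)$ for some even $a$ with $0\leq a\leq p-3$ (no base summand at all for~\ref{lastbit4}), together with (ii) summands of the form $\nabla(p-1)\otimes\nabla(c)\otimes Y(r')^{F}$ with $0\leq c\leq p-1$; the finitely many small $n$ at which a recursion is not yet applicable I would check directly. The base summand has the shape $Y(2t)=L(t)\otimes L(t)$ with $2t\leq p-3\leq p-1$, so by Clebsch--Gordan $L(t)\otimes L(t)\cong\bigoplus_{i=0}^{t}L(2t-2i)$, which contains $T(2)=L(2)$ exactly when $t\geq1$. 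For a type-(ii) summand, a short symmetric character computation shows that $\nabla(p-1)\otimes\nabla(c)$ is a direct sum of indecomposable tilting modules $T(\mu)$ with $p-1\leq\mu\leq 2p-2$; since each $Y(r')^{F}$ is a direct sum of Frobenius twists $T(\nu)^{F}$ (again Lemma~\ref{NablaDeltaTilting}), Donkin's tensor product theorem for tilting modules of $SL_2$ gives $T(\mu)\otimes T(\nu)^{F}\cong T(\mu+p\nu)$, so every indecomposable summand of a type-(ii) term is some $T(\mu+p\nu)$ with $\mu\geq p-1$.

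When $p>3$ we have $p-1>2$, so type-(ii) summands are never $T(2)$, and hence $T(2)\mid Y(2n)$ if and only if the base term satisfies $t\geq1$. Reading this off the three recursions --- $n\equiv0$ gives base $Y(0)=K$; $n\equiv j$ with $1\leq j\leq(p-3)/2$ gives base $Y(2j)$, which contains $T(2)$; $n\equiv j$ with $(p-1)/2\leq j\leq p-2$ gives base $Y(2(p-2-j))$, which contains $T(2)$ iff $j\leq p-3$; and $n\equiv p-1$ has no base term --- one obtains that $T(2)\mid Y(2n)$ precisely when $n\not\equiv0,\,p-2,\,p-1\pmod p$, which is (i).

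The main obstacle I expect is the case $p=3$. There $L(2)=\nabla(p-1)=T(p-1)$ is itself one of the tilting modules occurring in (ii) (indeed $\nabla(p-1)\otimes\nabla(p-1)\cong T(2p-2)\oplus T(p-1)$ for every $p$, and at $p=3$ the second summand is $T(2)$), so type-(ii) terms can no longer be discarded: a summand $T(p-1)\otimes T(\nu)^{F}=T(p-1+p\nu)$ is $T(2)$ exactly when $\nu=0$. One must therefore descend one more level of the recursion: after applying~\ref{lastbit}, \ref{lastbit3} or~\ref{lastbit4}, every occurrence of $T(2)$ comes from a term $\nabla(p-1)\otimes\nabla(c)\otimes Y(2s)^{F}$ in which $\nabla(p-1)\otimes\nabla(c)$ has $T(p-1)$ as a summand (which happens exactly when $c$ is even), and such a term contributes $T(2)$ if and only if the trivial module $K=T(0)$ is a direct summand of $Y(2s)=\nabla(s)\otimes\nabla(s)^{*}$; since $\nabla(s)$ is indecomposable, Proposition~\ref{BensonCarlsonTrivial} gives that this holds if and only if $p\nmid\dim\nabla(s)=s+1$. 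As $s$ is essentially $\lfloor n/3\rfloor$, the resulting condition on $n$ is a congruence modulo $9$; carrying out the three residues $n\equiv0,1,2\pmod3$ (and checking the small cases $n=1,2$ by hand) yields exactly $n\equiv1,2,\dots,6\pmod9$, which is (ii). The remaining work --- the Clebsch--Gordan and character bookkeeping and the final modular arithmetic --- is routine; the only genuine subtlety is the coincidence $L(2)=T(p-1)$ at $p=3$, which is precisely what promotes the modulus from $p$ to $p^{2}$.
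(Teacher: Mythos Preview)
Your proof is correct and follows the same overall architecture as the paper: use Lemma~\ref{NablaDeltaTilting} to see that $Y(2n)$ is tilting, reduce via Lemma~\ref{LookForL2} and Corollary~\ref{SimpleHomSpaces} to the question of whether $T(2)$ occurs among its indecomposable summands, and then feed $n$ through the recursions~\ref{lastbit}--\ref{lastbit4}.

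The genuine difference lies in how you dispose of the ``type-(ii)'' summands $\nabla(p-1)\otimes\nabla(c)\otimes Y(r')^{F}$. The paper observes that $\nabla(p-1)$ is indecomposable of dimension $p$ and invokes Corollary~\ref{BensonCarlson} to conclude that every indecomposable summand of $\nabla(p-1)\otimes N$ has dimension divisible by $p$, so $L(2)$ (of dimension~$3$) cannot appear when $p>3$; for $p=3$ it then argues case-by-case with composition factors. You instead compute that $\nabla(p-1)\otimes\nabla(c)\cong\bigoplus_{\mu}T(\mu)$ with each $\mu\in[p-1,2p-2]$ and appeal to Donkin's tensor product formula $T(\mu)\otimes T(\nu)^{F}\cong T(\mu+p\nu)$ for such $\mu$, so that every indecomposable summand is $T(\kappa)$ with $\kappa\geq p-1$. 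This is a cleaner, more structural argument and has the pleasant feature of handling $p=3$ uniformly: the coincidence $T(2)=T(p-1)$ shows immediately that a contribution can arise only from a $T(p-1)$ summand of $\nabla(p-1)\otimes\nabla(c)$ (present exactly when $c$ is even) tensored with a trivial summand of $Y(r')^{F}$, whence Proposition~\ref{BensonCarlsonTrivial} finishes the job. The paper's route, by contrast, stays entirely within the toolkit developed earlier in the paper and does not need to import Donkin's tilting tensor product theorem. Both arguments are short; yours is more systematic, the paper's more self-contained.
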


\begin{proof}

First suppose $K$ has characteristic $p>3$.  Although in proving this result we will deal with several cases, they all resolve to the same issue: what ultimately matters is the residue class of $n$ modulo $p$.  As in the calculations above, we write $r=2n$ and $\nabla(n)\otimes \nabla(n)^* \cong \nabla(n)\otimes \Delta(n) = Y(r)$ as a direct sum of tilting modules.  Recall that by \ref{NablaAndDeltaHomSpaces}, if we exhibit a summand of $\nabla(n)\otimes\Delta(n)$ that is isomorphic to $L(2)$, then we know $\nabla(n)$ does give a reductive pair.  On the other hand, if we can show that in a given decomposition of ${\nabla(n)\otimes\Delta(n)}$ into (not necessarily indecomposable) direct summands $M_i$, none of the $M_i$ has a summand isomorphic to $L(2)$, then $\nabla(n)$ cannot give a reductive pair, by the Krull-Schmidt theorem.

By equation~\ref{lastbit}, we see that when $n\equiv 0 , 1, \ldots , \frac{p-3}{2}$ (mod $p$), $\nabla(n)$ gives a reductive pair if and only if $L(2\overline{n})$ gives a reductive pair, where $\overline{n}$ is the least residue of $n$ modulo $p$.  When $\overline{n}=0$, $\nabla(n)$ does not give a reductive pair: by corollary~\ref{BensonCarlson}, we know that no module isomorphic to $L(2)$ can be a summand of any module of the form $\nabla(p-1)\otimes N$ (where $N$ is a $G$-module), since $\nabla(p-1)$ is $p$-dimensional and indecomposable (it is irreducible).  We combine this this with the last observation in the previous paragraph, noting that none of the summands in \ref{lastbit} has a summand isomorphic to $L(2)$.  In the rest of the cases we have $0<2\overline{n}\leq p-3$, so that $\nabla(n)$ does give a reductive pair.

By equation~\ref{lastbit3}, we see that if $n\equiv \frac{p-1}{2}, \frac{p-1}{2}+1, \ldots, p-3$ (mod $p$), then $\nabla(n)$ does give a reductive pair, while for $n\equiv p-2$ it does not.  This is because if $n\equiv \frac{p-1}{2} + \frac{a}{2}$ (mod $p$) where $0\leq a \leq p-3$, $a$ even, then $\nabla(n)$ gives a reductive pair if and only if $L(\frac{p-a-3}{2})$ gives a reductive pair, which is true for $0\leq a \leq p-5$ but false for $a=p-3$.

Finally we look at equation~\ref{lastbit4}.  Thus if $n\equiv p-1$ (mod $p$), then $\nabla(n)$ does not give a reductive pair, as corollary~\ref{BensonCarlson} shows $\nabla(n)\otimes\Delta(n)$ does not have a summand isomorphic to $L(2)$ (as before).  Combining this and the observations about other residue classes above, the proof for $p>3$ is complete.

Now suppose $p=3$.  In this case, letting $a=0$ in equation~\ref{lastbit}, we have 
\begin{equation}\label{lastbitp3}
Y(6m)\cong \nabla(2)\otimes\nabla(1)\otimes Y(2m-1)^F \oplus \nabla(2)\otimes\nabla(0)\otimes Y(2m-2)^F \oplus Y(0).
\end{equation}
Equation~\ref{lastbit3} becomes
\begin{equation}\label{lastbit2p3}
Y(6m+2)\cong \nabla(2)\otimes Y(2m)^F \oplus \nabla(2)\otimes\nabla(1)\otimes Y(2m-1)^F \oplus Y(0).\end{equation}
Finally, equation~\ref{lastbit4} becomes
\begin{equation}\label{lastbit4p3}
Y(6m+4)=\nabla(2)\otimes\nabla(2)\otimes Y(2m)^F.
\end{equation}
From \ref{lastbitp3}, we see that if $p\nmid m$, then $Y(6m)$ has a summand isomorphic to $L(2)$.  This is because the term $\left(\nabla(2)\otimes\nabla(0)\otimes Y(2m-2)^F \right)$ itself has a summand $L(2)\otimes L(0) \otimes L(0)^F$, using proposition~\ref{BensonCarlsonTrivial}, noting that $Y(2m-2)^F=\left(\nabla(m-1)\otimes\Delta(m-1)\right)^F$.  Since $Y(6m)=\nabla(3m)\otimes\Delta(3m)$, we therefore have that $\nabla(3m)$ gives a reductive pair for each $m$ coprime to $3$, namely ${\nabla(3\times 1),} \nabla(3\times 4), \nabla(3\times 7)$, or generally those of the form $\nabla(3+9k)$; and also $\nabla(3\times 2), \nabla(3\times 5)$, or in other words those of the form $\nabla(6+9k)$.  On the other hand it may be shown that $L(2)\otimes L(1)\cong T(3)$ in characteristic $3$.  Thus $L(2)$ cannot be a summand of either of the other terms in equation~\ref{lastbitp3}: the composition factors of $T(3)$ in characteristic $3$ are $L(1), L(3), L(1)$, as noted above.  Hence the composition factors of $T(3)\otimes Y(2m-1)^F$ are the composition factors of $L(1)\otimes Y(2m-1)^F$ (twice each) and the composition factors of $L(3)\otimes Y(2m-1)^F$. If $T(3)\otimes Y(2m-1)^F$ has a summand isomorphic to $L(2)$, then this summand is in particular a submodule, and, being simple, is therefore a composition factor.  Thus it is enough to know that $L(2)$ cannot be a composition factor of either $L(1)\otimes Y(2m-1)^F$ or $L(3)\otimes Y(2m-1)^F \cong (L(1)\otimes Y(2m-1))^F$; considering the weights of these modules, we see that no composition factor of either may have highest weight congruent to $2$ modulo $3$.  Finally, $Y(0)$ is $1$-dimensional, so $L(2)$ cannot be a submodule.

From \ref{lastbit2p3}, once again using proposition~\ref{BensonCarlsonTrivial}, $Y(6m+2)$ does have a summand isomorphic to $L(2)$ if $p\nmid m+1$, noting that $Y(2m)^F=\left(\nabla(m)\otimes\Delta(m)\right)^F$.  Then, following the same process as for the previous case, we see that we get a reductive pair from each $\nabla(1+9k)$ and each $\nabla(4+9k)$.  Since the other terms in equation~\ref{lastbit2p3} are the same as in the previous case, the same reasoning shows that $L(2)$ is not a summand of either of the those terms.

From \ref{lastbit4p3}, we see that $Y(6m+4)$ has a summand isomorphic to $L(2)$ if $p\nmid m+1$.  To see this, note that $L(2)\otimes L(2)\cong T(4) \oplus L(2)$ then apply the same reasoning as before.  In this case, we see that we get reductive pairs from $\nabla(2+9k)$ and $\nabla(5+9k)$.  Again, these are the only ways to get a summand isomorphic to $L(2)$.

To summarise: from the cases above, we see that when $p=3$, $\nabla(n)$ gives a reductive pair if and only if $n \equiv 1, 2, \dots 6$ (mod $9$).

When $p=2$, no $\SL_2(K)$-module gives a reductive pair.  
\end{proof}

\begin{remark}
 As with the case for simple modules above, it is also fairly easy to see directly from the representation theory that no Weyl module for $\SL_2(K)$ gives a reductive pair in characteristic $2$.  First recall that direct summands of tilting modules are tilting modules.  Thus, if the image of $\Lie \rho(\SL_2(K))$ in $\nabla(n)\otimes \Delta(n)$ is not a tilting module, it cannot be a summand.  With this in mind, consider that the differential $d\rho : \mathfrak{sl}_2(K) \to \Lie \left( \rho \SL_2(K)\right)$ is injective unless $n$ is even, in which case it has as its kernel the scalar matrices.  If $n$ is odd, $\Lie \rho(\SL_2)$ is therefore the $\SL_2(K)$-module $\nabla(2)$, which is not a tilting module.  Thus $\nabla(n)$ does not give a reductive pair when $n$ is odd in characteristic $2$.  If $n$ is even, then $\Lie \rho(\SL_2)$ has a $2$-dimensional simple submodule coming from the image $d\rho \left(\mathfrak{sl}_2(K)\right)$.  Since $\mathfrak{sl}_2(K)$ is the $\SL_2(K)$-module $\nabla(2)$, this simple module must be $L(2)$ (which in characteristic $2$ is $L(1)^F$, which is $2$-dimensional).  There are then two possibilities: either $\Lie \rho(\SL_2)$ is indecomposable, in which case it is the module $\Delta(2)$; or else it has a decomposition as $L(2)\oplus L(0)$.  Since $\Delta(2)$ and $L(2)$ are not tilting, in either of these cases this is enough information to conclude that $\nabla(n)$ does not give a reductive pair for even $n$ in characteristic $2$.
\end{remark}

\begin{example}

Let $k$ have characteristic $p>3$. By \ref{symmetric}, we see that $\nabla(p)$ does not give rise to a reductive pair; by \ref{simple}, we see that the simple module $L(1+\frac{p(p-1)}{2})= L(1) \otimes L(\frac{p-1}{2})^F$ does.  Both of these modules have dimension $p+1$, and we note that $2<p<2(p+1)-2 = 2p$.  Thus, if the characteristic is not $3$, we have examples of both sorts of behaviour between the bounds in \cite[3.1]{MR2608407}.
\end{example}

\section{The group $\SL_3$}

Now let $G$ = $\SL_3(K)$. Consider a rational representation $\rho: \SL_3(K)\to \GL(V)$.  If the representation is non-trivial, the kernel of $\rho$ is either trivial or the centre of $\SL_3(K)$, and is in either case finite.  When $p\neq 3$, the adjoint representation is irreducible and has highest weight $(1,1)$.  Although some progress has been made (e.g. \cite{MR2753767,1111.5811}), no complete tensor product decomposition (as in \cite{MR2143497}) seems to have appeared at the time of writing.  We will instead exploit the method set out in remark~\ref{CompFacsMethod} to generate some examples.

In Yehia's PhD thesis~\cite{Yehia}, $\Ext^1_G(L(\mu),L(\lambda))$ is shown to be at most one-dimensional for $G$ of type $A_2$.  Furthermore, for each dominant weight $\lambda$ the set of weights \begin{displaymath}A(\lambda):= \{\mu \in X(T)_+ \mid \Ext^1_G(L(\mu),L(\lambda)) \neq 0\}\end{displaymath} is determined explicitly.  For our purposes it will be enough to consider the following result, which is an abridgement of \cite[Proposition~4.1.1]{Yehia}.

\begin{proposition}\label{YehiaExtResult}
Suppose $\lambda, \mu\in X(T)_+$, $\lambda=\lambda_0 + p\lambda'$, $\mu = \mu_0 + p\mu'$, where $\lambda_0, \mu_0\in X_1$ (the restricted region) and $\lambda',\mu'\in X(T)_+$.  Moreover suppose $\lambda_0 \neq \mu_0$.  Then
  \begin{enumerate}
  \item A necessary condition for $\Ext_G^1(L(\mu),L(\lambda))$ to be non-zero is that $\mu_0 $ is in the $W_p$ orbit of $\lambda_0$.
  \item If $\lambda_0=(r,s)\in A_0$ (the bottom alcove) and $\mu_0$ is \emph{not} one of $(p-s-2,p-r-2), (r+s+1,p-s-2)$ or $(p-r-2,r+s+1)$, we have $\Ext^1_G(L(\mu),L(\lambda))=0$.
  \end{enumerate}
\end{proposition}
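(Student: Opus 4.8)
The plan is to descend to the first Frobenius kernel $G_1=\ker(F\colon G\to G)$, which is normal in $G$ with $G/G_1$ identified with $G$ via $F$. By Steinberg's tensor product theorem $L(\lambda)\cong L(\lambda_0)\otimes L(\lambda')^F$ and $L(\mu)\cong L(\mu_0)\otimes L(\mu')^F$, and since $G_1$ acts trivially on the Frobenius-twisted factors one gets, as $G/G_1$-modules,
\[
\Ext^j_{G_1}\big(L(\mu),L(\lambda)\big)\;\cong\;\Ext^j_{G_1}\big(L(\mu_0),L(\lambda_0)\big)\otimes\Hom_K\big(L(\mu'),L(\lambda')\big)^F .
\]
Substituting this into the Lyndon--Hochschild--Serre spectral sequence $E_2^{i,j}=H^i\!\big(G/G_1,\Ext^j_{G_1}(L(\mu),L(\lambda))\big)\Rightarrow\Ext^{i+j}_G(L(\mu),L(\lambda))$, and using that $\lambda_0\neq\mu_0$ are restricted so that $L(\lambda_0)$ and $L(\mu_0)$ are non-isomorphic simple $G_1$-modules and $\Hom_{G_1}(L(\mu_0),L(\lambda_0))=0$, the entire row $j=0$ of $E_2$ vanishes. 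Hence $\Ext^1_G(L(\mu),L(\lambda))\cong E_2^{0,1}$, a group having $\Ext^1_{G_1}(L(\mu_0),L(\lambda_0))$ as a tensor factor; in particular it vanishes whenever $\Ext^1_{G_1}(L(\mu_0),L(\lambda_0))=0$. Everything is thereby reduced to the infinitesimal group $G_1$.

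Part~(1) is then immediate from the linkage principle for $G_1$ (the infinitesimal analogue of Proposition~\ref{LinkagePrinciple}; see \cite{MR2015057}): $\Ext^1_{G_1}(L(\mu_0),L(\lambda_0))\neq 0$ forces $\mu_0$ into the affine Weyl orbit of $\lambda_0$. I would \emph{not} try to derive (1) from Proposition~\ref{LinkagePrinciple} for $G$ through Steinberg directly, since that forces one to control root-lattice cosets of $\mu_0-w\cdot\lambda_0$ and needs separate attention in small characteristic, whereas the $G_1$-statement is tailored to exactly this situation.

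For part~(2) I combine (1) with some information about the lowest alcove. By (1), $\mu_0$ lies in $W_p\cdot\lambda_0$, and for $\lambda_0=(r,s)\in A_0$ one checks that the restricted weights in this orbit are few: they all lie in $A_0\cup A_1$, where $A_1$ is the single alcove adjacent to $A_0$ across the wall $\langle\,\cdot+\rho,\alpha_0^\vee\rangle=p$; those in $A_1$ are exactly the three weights $(p-s-2,p-r-2)=s_0\cdot\lambda_0$, $(r+s+1,p-s-2)$ and $(p-r-2,r+s+1)$ of the statement, while the others (besides $\lambda_0$ itself) lie in $A_0$. Thus it remains to show $\Ext^1_{G_1}(L(\mu_0),L(\lambda_0))=0$ whenever $\mu_0\in A_0$ is linked to $\lambda_0$ and $\mu_0\neq\lambda_0$; granting this, a weight $\mu_0$ not among the three listed must fall outside $W_p\cdot\lambda_0$, so $\Ext^1_G(L(\mu),L(\lambda))=0$ by the reduction.

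The main obstacle is precisely that last step — ruling out $G_1$-extensions from a lowest-alcove weight to the \emph{other} lowest-alcove weights linked to it. This is not a formality: for $\mathrm{SL}_2$ the analogous vanishing fails (the partner weight of $r$ in the lowest alcove is $p-2-r$, also in the lowest alcove), so it is a genuinely type-$A_2$ phenomenon. I would extract it from the radical structure of the baby Verma (``reduced standard'') modules $\widehat Z_1(\mu_0)$ in the generic lowest-alcove range, or from the known description of $\Ext^1$ between simple $G_1$-modules in the generic pattern, and it is here that the $A_2$ alcove geometry, the Jantzen $p$-sum formula, and (for small $p$) case distinctions must be handled. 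The sharper assertion — that the three listed weights are exactly the $\mu_0$ with $\Ext^1_{G_1}(L(\mu_0),L(\lambda_0))\neq 0$, not merely an upper bound — would require the same structural input applied in the reverse direction, and is not needed for the statement above.
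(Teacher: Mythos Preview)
The paper does not prove this proposition at all: it is quoted as an abridgement of \cite[Proposition~4.1.1]{Yehia} and used as a black box. So there is no paper proof to compare against, only your sketch.

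Your Lyndon--Hochschild--Serre reduction is standard and correct, and it does give that $\Ext^1_G(L(\mu),L(\lambda))$ vanishes whenever $\Ext^1_{G_1}(L(\mu_0),L(\lambda_0))=0$. The serious issue is with your invocation of linkage for $G_1$ in part~(1). For the Frobenius kernel $G_1$ (as opposed to $G_1T$) the linkage principle only yields $\mu_0\in W_p\cdot\lambda_0+pX$, i.e.\ linkage under the \emph{extended} affine Weyl group $W\ltimes pX$, not under $W_p=W\ltimes p\mathbb{Z}\Phi$; for type $A_2$ these differ since $[X:\mathbb{Z}\Phi]=3$. Concretely, the six restricted weights congruent to some $w\cdot(r,s)$ modulo $pX$ are
\[
(r,s),\ (p-r-s-3,r),\ (s,p-r-s-3)\ \in\overline{A_0},\qquad (p-s-2,p-r-2),\ (r+s+1,p-s-2),\ (p-r-2,r+s+1)\ \in\overline{A_1},
\]
and only the first of the three $A_1$-weights, namely $s_0\cdot\lambda_0=(p-s-2,p-r-2)$, actually lies in $W_p\cdot\lambda_0$. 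So your assertion that ``those in $A_1$ are exactly the three weights'' of part~(2) is correct for the extended orbit but false for $W_p\cdot\lambda_0$; and your claim that (1) is ``immediate from the linkage principle for $G_1$'' proves only the extended statement. Indeed, the very fact that (2) lists all three $A_1$-weights strongly suggests that (1) is intended with the extended group, since otherwise two of the three exceptions in (2) would already be excluded by (1).

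With the extended reading of (1), your outline for (2) is on the right track: what remains is to show $\Ext^1_{G_1}(L(\mu_0),L(\lambda_0))=0$ for the two $A_0$-weights $(p-r-s-3,r)$ and $(s,p-r-s-3)$. You flag this as the main obstacle and defer it to the structure of baby Verma modules or known $A_2$ extension tables; that is exactly what Yehia's thesis supplies, and it is not a triviality you can skip. So as written your argument is a correct reduction together with an honest acknowledgement of where the real content lies, but it is not a proof.
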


Hence for $\SL_3(K)$ we may do significantly better than using linkage alone.  The full result in \cite{Yehia} considers in point $(2)$ any $\lambda_0$ in the restricted region.  We may (somewhat crudely) use proposition~\ref{YehiaExtResult} in combination with the linkage principle~\ref{LinkagePrinciple} in order to create a ``mask" of those dominant weights for which extensions of the simple module $L(1,1)$ by simple modules with this highest weight are all split.  The result is a reduced selection of weights to look for amongst the composition factors of $V\otimes V^*$.  The following example illustrates the use of the method outlined in remark~\ref{CompFacsMethod} in this case.

\begin{example}
  Let $p=5$.  By considering characters, the composition factors of $L(5,1)\otimes L(5,1)^*$ are calculated to have highest weights (with multiplicities following) $$(6,6),1,(5,5),1,(1,1),1,(0,0)1.$$  The weight $(6,6)$ is in the $W_p$ orbit of $(1,1)$, so the linkage principle alone does not allow us to rule out this weight as possibly contributing a non-split extension.  However, $(6,6)$ is \emph{not} one of the weights specified by \ref{YehiaExtResult}.  We may therefore infer that $L(5,1)$ gives a reductive pair.
\end{example}

Stephen Doty has written a package~\cite{DotyWeylMods} for the computer algebra software GAP~\cite{GAP4} which can perform calculations pertaining to Weyl modules.  We used this software to calculate the composition factors of tensor products of simple modules with small highest weights in type $A_2$ in a variety of small characteristics.  As a further example, we combine information gathered using this GAP package with corollary~\ref{SimpleExampleMachine} to determine some classes of modules giving reductive pairs.
\begin{proposition}
Let $K$ have characteristic $7$ and let $n\geq 1$ be an integer.  Let $\lambda\in A=\{(1,0),(0,1),(1,1),\\(2,0),(0,2),(2,1),(1,2),(3,0),(0,3)\}$ and $\mu\in B=\{(1,0), (0,1), (1,1), (2,0),(0,2),(2,1),(1,2),(2,2),\\(3,0),(0,3),(3,1),(1,3)\}$.  Then the module $L(\lambda + p^n\mu)$ gives a reductive pair. 
\end{proposition}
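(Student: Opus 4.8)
The plan is to verify the hypotheses of Corollary~\ref{SimpleExampleMachine} for each pair $(\lambda,\mu)\in A\times B$, with $G=\SL_3(K)$ and $p=7$. First I would check the standing hypotheses of that corollary: $\SL_3$ is simple, and $p=7$ is very good for a group of type $A_2$ (since $7\nmid 3$, so $7$ is good, and $A_2$ has no bad-prime or very-good-prime complications beyond $p\mid n+1$ which fails here). Every weight in $A$ is restricted (all coordinates lie in $\{0,1,2,3\}\subset\{0,\dots,p-1\}$), so the first ingredient we need is that for each $\lambda\in A$ the simple module $L(\lambda)$ gives a reductive pair. The second ingredient we need is that for each $\mu\in B$ we have $p\nmid\dim L(\mu)$.

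The dimension computations for $\mu\in B$ are the routine part: one simply reads off $\dim L(\mu)$ for these twelve small weights in type $A_2$ at $p=7$ (using the GAP package of \cite{DotyWeylMods}, or Weyl's formula where $L(\mu)=\nabla(\mu)$, which holds for all sufficiently small weights) and checks that none is divisible by $7$. Since $7$ is relatively large compared to these weights, most of the $L(\mu)$ are Weyl modules and the divisibility check is immediate; the few weights where $L(\mu)\subsetneq\nabla(\mu)$ still have small, explicitly computable dimension. This step I expect to present essentially as a table.

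The substantive part is showing that $L(\lambda)$ gives a reductive pair for each of the nine weights $\lambda\in A$. Here I would invoke the method of Remark~\ref{CompFacsMethod}: using the GAP package one computes the composition factors of $L(\lambda)\otimes L(\lambda)^*\cong\Lie\GL(L(\lambda))$ for each $\lambda\in A$; the Lie algebra $\Lie\SL_3=L(1,1)$ (irreducible since $7\nmid 3$) sits at the bottom of a composition series, and one must show every composition factor $L(\nu)$ appearing above it has $\Ext^1_G(L(\nu),L(1,1))=0$, so that Lemma~\ref{SplitSequenceInduction} forces $L(1,1)$ to split off. To control these $\Ext^1$-groups one combines the linkage principle (Proposition~\ref{LinkagePrinciple}) with Yehia's refinement (Proposition~\ref{YehiaExtResult}): any $\nu$ not in the $W_p$-orbit of $(1,1)$, and any $\nu$ whose restricted part is not among the three weights flagged in part~(2) of Proposition~\ref{YehiaExtResult}, contributes a split extension. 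So for each $\lambda\in A$ one lists the composition factors of $L(\lambda)\otimes L(\lambda)^*$, checks that the only one linked to $(1,1)$ and surviving the Yehia mask is $(1,1)$ itself (whose self-extensions vanish by Lemma~\ref{ExtLambdaLambda}), and concludes.

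The main obstacle is the last step: it is conceivable that for some $\lambda\in A$ a ``bad'' composition factor — one in the $W_p$-orbit of $(1,1)$ with restricted part among the flagged weights — genuinely appears, in which case Remark~\ref{CompFacsMethod} is inconclusive and one would need a separate argument (e.g.\ a direct summand computation, or bounding $\Hom$-spaces as in the $\SL_2$ case) to decide that particular $\lambda$. I would therefore want to carry out the GAP computation for all nine weights first; if the mask clears every case, the proof is a clean assembly of the two ingredient checks via Corollary~\ref{SimpleExampleMachine}, and if it does not, the problematic weights would be handled by hand. Given that $A$ was presumably chosen precisely so that the method works, I expect the mask to clear, and the proof to read: verify $p=7$ is very good for $A_2$; tabulate $\dim L(\mu)$ for $\mu\in B$ and note $7\nmid\dim L(\mu)$; for each $\lambda\in A$ apply Remark~\ref{CompFacsMethod} (GAP composition factors $+$ linkage $+$ Proposition~\ref{YehiaExtResult}) to see $L(\lambda)$ gives a reductive pair; then quote Corollary~\ref{SimpleExampleMachine}.
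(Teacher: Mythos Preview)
Your proposal is correct and follows essentially the same route as the paper: verify the hypotheses of Corollary~\ref{SimpleExampleMachine} by (i) checking via Remark~\ref{CompFacsMethod} and Doty's GAP package that each $L(\lambda)$ with $\lambda\in A$ gives a reductive pair, and (ii) checking that $7\nmid\dim L(\mu)$ for $\mu\in B$. The only refinement the paper adds is to observe explicitly that every $\mu\in B$ lies in the bottom alcove (so $L(\mu)=\nabla(\mu)$ by \cite[II,5.6]{MR2015057} and Weyl's dimension formula applies directly to all twelve weights), which streamlines your dimension step and removes the need for your caveat about possible weights with $L(\mu)\subsetneq\nabla(\mu)$.
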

\begin{proof}
The weights in the set $A$ of the statement are all restricted, and the simple modules with these highest weights give reductive pairs: this may be seen by examining the composition factors of the tensor product of a given simple module with its dual (obtained using Doty's GAP package), then applying the method in remark~\ref{CompFacsMethod}.  Since the weights in $B$ are all in the bottom alcove, \cite[II,5.6]{MR2015057} implies that each simple module with highest weight in $B$ is equal to the induced module with the same highest weight.  Then note that $p=7$ does not divide the dimensions (calculated using Weyl's dimension formula \cite[Cor.24.6]{FultonHarris}) of the these simple modules.  Thus the conditions of corollary~\ref{SimpleExampleMachine} are satisfied, and for $\lambda \in A, \mu \in B$ we have that $L(\lambda)\otimes L(\mu)^F \cong L(\lambda + p^n\mu)$ gives a reductive pair.
\end{proof}

\bibliographystyle{plain}
\bibliography{bibliography}

\def\cprime{$'$}
\begin{thebibliography}{10}

\bibitem{MR2861529}
M.~Bate, S.~Herpel, B.~Martin, and G.~R{\"o}hrle.
\newblock {$G$}-complete reducibility and semisimple modules.
\newblock {\em Bull. Lond. Math. Soc.}, 43(6):1069--1078, 2011.

\bibitem{MR2178661}
M.~Bate, B.~Martin, and G.~R{\"o}hrle.
\newblock A geometric approach to complete reducibility.
\newblock {\em Invent. Math.}, 161(1):177--218, 2005.

\bibitem{MR2608407}
M.~Bate, B.~Martin, G.~R{\"o}hrle, and R.~Tange.
\newblock Complete reducibility and separability.
\newblock {\em Trans. Amer. Math. Soc.}, 362(8):4283--4311, 2010.

\bibitem{MR866779}
D.~J. Benson and J.~F. Carlson.
\newblock Nilpotent elements in the {G}reen ring.
\newblock {\em J. Algebra}, 104(2):329--350, 1986.

\bibitem{MR1102012}
A.~Borel.
\newblock {\em Linear algebraic groups}, volume 126 of {\em Graduate Texts in
  Mathematics}.
\newblock Springer-Verlag, New York, second edition, 1991.

\bibitem{MR1727844}
N.~Bourbaki.
\newblock {\em Algebra {I}. {C}hapters 1--3}.
\newblock Elements of Mathematics (Berlin). Springer-Verlag, Berlin, 1998.
\newblock Translated from the French, Reprint of the 1989 English translation [
  MR0979982 (90d:00002)].

\bibitem{MR2753767}
C.~Bowman, S.~R. Doty, and S.~Martin.
\newblock Decomposition of tensor products of modular irreducible
  representations for {${\rm SL}_3$}.
\newblock {\em Int. Electron. J. Algebra}, 9:177--219, 2011.
\newblock With an appendix by C. M. Ringel.

\bibitem{1111.5811}
C.~Bowman, S.~R. Doty, and S.~Martin.
\newblock Decomposition of tensor products of modular irreducible
  representations for {${\rm SL}_3$}: the $p \geq 5$ case.
\newblock 2011.
\newblock arXiv:1111.5811.

\bibitem{MR1200163}
S.~Donkin.
\newblock On tilting modules for algebraic groups.
\newblock {\em Math. Z.}, 212(1):39--60, 1993.

\bibitem{DotyWeylMods}
S.~Doty.
\newblock Weyl modules gap package.
\newblock \url{http://doty.math.luc.edu/weylmodules}.
\newblock Accessed: 15/09/2014.

\bibitem{MR2143497}
S.~Doty and A.~Henke.
\newblock Decomposition of tensor products of modular irreducibles for {${\rm
  SL}_2$}.
\newblock {\em Q. J. Math.}, 56(2):189--207, 2005.

\bibitem{FultonHarris}
W.~Fulton and J.~Harris.
\newblock {\em Representation theory}, volume 129 of {\em Graduate Texts in
  Mathematics}.
\newblock Springer-Verlag, New York, 1991.
\newblock A first course, Readings in Mathematics.

\bibitem{GAP4}
The GAP~Group.
\newblock {\em {GAP -- Groups, Algorithms, and Programming, Version 4.7.5}},
  2014.

\bibitem{MR3042602}
Sebastian Herpel.
\newblock On the smoothness of centralizers in reductive groups.
\newblock {\em Trans. Amer. Math. Soc.}, 365(7):3753--3774, 2013.

\bibitem{MR2015057}
J.~C. Jantzen.
\newblock {\em Representations of algebraic groups}, volume 107 of {\em
  Mathematical Surveys and Monographs}.
\newblock American Mathematical Society, Providence, RI, second edition, 2003.

\bibitem{AlisonHigherExtensions}
A.~E. Parker.
\newblock Higher extensions between modules for {$\rm SL_2$}.
\newblock {\em Adv. Math.}, 209(1):381--405, 2007.

\bibitem{MR0217079}
R.~W. Richardson, Jr.
\newblock Conjugacy classes in {L}ie algebras and algebraic groups.
\newblock {\em Ann. of Math. (2)}, 86:1--15, 1967.

\bibitem{MR2167207}
Jean-Pierre Serre.
\newblock Compl\`ete r\'eductibilit\'e.
\newblock {\em Ast\'erisque}, (299):Exp. No. 932, viii, 195--217, 2005.
\newblock S{\'e}minaire Bourbaki. Vol. 2003/2004.

\bibitem{MR1635690}
Peter Slodowy.
\newblock Two notes on a finiteness problem in the representation theory of
  finite groups.
\newblock In {\em Algebraic groups and {L}ie groups}, volume~9 of {\em Austral.
  Math. Soc. Lect. Ser.}, pages 331--348. Cambridge Univ. Press, Cambridge,
  1997.
\newblock With an appendix by G.-Martin Cram.

\bibitem{Yehia}
S.~E.-B Yehia.
\newblock {\em Extensions of Simple Modules for the Universal Chevalley Groups
  and its Parabolic Subgroups}.
\newblock PhD thesis, Mathematics Institute, Warwick, 1982.

\end{thebibliography}

\end{document}